\numberwithin{equation}{section}
\newtheorem{thm}{Theorem}[section]
\newtheorem{theorem}[thm]{Theorem}
\newtheorem{proposition}[thm]{Proposition}
\newtheorem{definition}[thm]{Definition}
\newenvironment{proof}{{\bf Proof.}}{\hfill$\square$\vskip.5cm}
\title{Some Convergence Results on the Regularized Alternating Least-Squares Method for Tensor Decomposition}
\author{Na Li\footnote{Department of Mathematics, Clarkson University, Potsdam, NY 13699, USA, [nali,cnavasca]@clarkson.edu.} \,\, Stefan Kindermann\footnote{Industrial Mathematics Institute, Johannes Kepler Universitat Linz, Altenbergerstrasse 69, A-4040 Linz, Austria , kindermann@indmath.uni-linz.ac.at} \,\,  Carmeliza Navasca${}^*$\footnote{Corresponding author.}}
\date{\today}
\begin{document}

\maketitle

\begin{abstract}
\setcounter{section}{0}

We study the convergence of the Regularized Alternating Least-Squares algorithm for tensor decompositions. 
As a main result, we have shown that given the existence of critical points of the Alternating Least-Squares method, the limit points of  the converging subsequences of the RALS are the critical points of the least squares cost functional. Some numerical examples indicate a faster convergence rate for the RALS in comparison to the usual alternating least squares method. \\ %using the techniques found in ([3]).

\vspace{8pt}
\noindent
%{\small \bf Keywords: tensor decomposition, alternating least-squares, nonlinear Gauss-Seidel} 
\end{abstract}

\section{Introduction}

A well-known iterative method for CANDECOM/PARAFAC (CP) is the Alternating Least-Squares (ALS) technique. Independently, the ALS was introduced by Carol and Chang \cite{JJ} and Harshman \cite{RAH} in 1970. It has been extensively applied to many problems across various engineering \cite{Sid1}\cite{Sid2}\cite{Acar} \cite{DeVos} and science \cite{Smilde}\cite{Kroonenberg} fields; see the survey papers \cite{KB} \cite{Lieven} and the references therein.  For example, Beylkin and Mohlenkamp \cite{Beylkin} \cite{BM2} utilizes ALS to compute optimal separation rank for certain operators like inverse Laplacian and the multiparticle Schr\"odinger equation to reduce computational complexity. In a more recent application, Doostan et al. \cite{Doostan} has implemented ALS to study complex systems modeled by stochastic PDEs.

Its widespread success can be attributed to the simplicity of the method.  Moreover, Bro et al. \cite{Bro1} \cite{GR} found that the ALS method gives superior quality solutions with fewer memory and time requirements than the other CP methods. Despite its success, the ALS has some drawbacks.  For example, initialization of the factor matrices, collinearity in the factor matrices or degeneracy problems may require a high number of iterations for the ALS method to converge.  This slowed convergence characterized by a flat curve in a log error plot is referred to as the \emph{swamp}. Swamps can be present in the non-degenerate and degenerate cases. The degenerate case is a more challenging problem; see \cite{PP} \cite{LimComon} for some regularization techniques for the degenerate swamps.
 
Here we address the non-degenerate case. There have been several methods which address the issues of the swamp occurrences in the non-degenerate case. For example, line search schemes \cite{Comon}\cite{Nion} have been used to accelerate the ALS algorithm.  An entirely different approach by De Lathauwer, De Moor and Vandewalle obviates the swamp issues by considering a simultaneous matrix diagonalization for CP decomposition \cite{JustLieven} \cite{dLdMvdW}. Paatero \cite{PP2} have applied regularization to a gradient descent based method for CP.

In this paper, we analyze the Regularized Alternating Least-Squares (RALS) method introduced by Navasca, Kindermann and De Lathawer \cite{NDLK}.  The implementation of RALS is simple; it is no more complicated than the ALS algorithm. The cost functional of the RALS penalizes the difference between the current and previous factor iterates with a regularization parameter. Unlike the tensor regularization method found in \cite{PP} \cite{LimComon}, RALS is an unconstrained optimization problem since there is no uniform constraint in the penalty terms that are sequentially changing at each iteration, and thus, the sequences of limit points of RALS can be unbounded.
Hence, RALS does not address the degeneracy problem; i.e. RALS will not find a critical point if the original ALS functional does not have a critical point. 

The study of the convergence analysis of ALS and RALS is facilitated by an optimization framework; ALS is the nonlinear block Gauss-Seidel (GS) and RALS is the nonlinear block  proximal point modification of GS (PGS) for CP tensor decomposition. What we have shown is that if a limit point exists, then it is a critical point of the functional. More specifically, we study the ALS non-degenerate swamps by analyzing how RALS removes, if not, shortens the swamps. Furthermore, our convergence analysis brings attention to fact that the RALS functional has a weakened assumption than the requirement of the ALS functional. This finding sheds light on the swamps in the non-degenerate case in the ALS method.

\subsection{Organization}

Beginning with Section \ref{sec: two}, we give some preliminaries which include basic definitions of rank-one tensor and CP decomposition. Section \ref{alswithgs} reviews the classical ALS for third-order tensors and includes a discussion on the ALS swamp through an example. Section \ref{RALS} is the main section where we introduce the RALS method and show that if the sequence obtained from RALS algorithm converges, then the limit points are the critical points of the original ALS algorithm.  In Section $5$, we provide a numerical comparison study of ALS and RALS with several data sets. Lastly, we make some concluding remarks in Section $6$.

\section{Preliminaries}\label{sec: two}

We denote the scalars in $\mathbb{R}$ with lower-case letters $(a,b,\ldots)$ and the vectors with bold lower-case letters $(\bf{a},\bf{b},\ldots)$.  The matrices are written as bold upper-case letters $(\bf{A}, \bf{B},\ldots)$ and the symbol for tensors are calligraphic letters $(\mathcal{A},\mathcal{B},\ldots)$. The subscripts represent the following scalars:  $\mathcal{(A)}_{ijk}=a_{ijk}$, $(\bold{A})_{ij}=a_{ij}$, $(\bold{a})_i=a_i$ and the $r$-th column of a matrix $\bold{A}$ is $\bold{a_r}$. The matrix sequence is $\{\bold{A}^k\}$.

%Add review of tensor products + PARAFAC (tensor decomposition)
\begin{definition}
The Kronecker product of matrices $\bold{A}$ and $\bold{B}$ is defined as
{\small \begin{eqnarray*}
\bold{A} \otimes \bold{B}= \left[
\begin{array}{ccc}
a_{11}\bold{B}  & a_{12}\bold{B} & \ldots \\
a_{21}\bold{B}  & a_{22}\bold{B} & \ldots \\
\vdots                  & \vdots         
\end{array} \right].
\end{eqnarray*}}
\end{definition}

\begin{definition}
The Khatri-Rao product is the ``matching columnwise" Kronecker product. Given matrices $\bold{A} \in \mathbb{R}^{\mathnormal{I} \times \mathnormal{K}}$ and $\bold{B} \in \mathbb{R}^{\mathnormal{J} \times \mathnormal{K}}$, their Khatri-Rao product is denoted by $\bold{A} \odot \bold{B}$. The result is a matrix of size $(\mathnormal{IJ} \times \mathnormal{K})$ defined by
\begin{eqnarray*}
\bold{A} \odot \bold{B}= [\bold{A_1} \otimes \bold{B_1}~~ \bold{A_2} \otimes \bold{B_2}~~ \ldots].
\end{eqnarray*}
\end{definition}

%If $\bold{a}$ and $\bold{b}$ are vectors, then the Khatri-Rao and Kronecker products are identical, i.e., $\bold{a} \otimes \bold{b} = \bold{a} \odot \bold{b}$.

\begin{definition}[Mode-$n$ fibers]
A mode-$n$ fiber of an $N$th order tensor is a vector defined by fixing all indices but the $n$-th one.
\end{definition}

For example, a matrix column is a mode-$1$ fiber and a matrix row is a mode-$2$ fiber. Third-order tensors have column (mode-$1$), row (mode-$2$) and tube (mode-$3$) fibers, denoted by $\bold{x_{:jk}}$, $\bold{x_{i:k}}$ and $\bold{x_{ij:}}$ respectively. 

\begin{definition}[Mode-$n$ matricization]
Matricization is the process of reordering the elements of an $N$th order tensor into a matrix. The mode-$n$ matricization of a tensor $\mathcal{T} \in \mathbb{R}^{I_1 \times I_2 \times \cdots \times I_N}$ is denoted by $\bold{T_{(n)}}$ and concatenates the mode-$n$ fibers to be the columns of the resulting matrix.
\end{definition}

If we use a map to express such matricization process for any $N$th order tensor $\mathcal{T} \in \mathbb{R}^{I_1 \times I_2 \times \cdots \times I_N}$, that is, the tensor element $(i_1, i_2, \dots, i_N)$ maps to matrix element $(i_n, j)$, then there is a formula to calculate $j$: 
$$j=1+\sum_{\substack{k=1\\ k\neq n}}^{N}(i_k -1)J_k \quad \text{with} \quad J_k =\prod_{\substack{m=1\\ m\neq n}}^{k-1}I_m.$$
So, given a third-order tensor $\mathcal{X} \in \mathbb{R}^{I \times J \times K}$, the mode-$1$, mode-$2$ and mode-$3$ matricizations of $\mathcal{X}$ are:
\begin{eqnarray*}
 \bold{X_{(1)}}&=&[\bold{x_{:11}}, \dots, \bold{x_{:J1}}, \bold{x_{:12}}\dots,\bold{x_{:J2}}, \dots, \bold{x_{:1K}}, \dots, \bold{x_{:JK}}] ,\\
  \bold{X_{(2)}}&=&[\bold{x_{1:1}}, \dots, \bold{x_{I:1}}, \bold{x_{1:2}}\dots,\bold{x_{I:2}}, \dots, \bold{x_{1:K}}, \dots, \bold{x_{I:K}}] ,\\
   \bold{X_{(3)}}&=&[\bold{x_{11:}}, \dots, \bold{x_{I1:}}, \bold{x_{12:}}\dots,\bold{x_{I2:}}, \dots, \bold{x_{1J:}}, \dots, \bold{x_{IJ:}}].
  \end{eqnarray*}

\begin{definition}[Rank-one tensor]
An $N$th order tensor $\mathcal{T} \in \mathbb{R}^{\mathnormal{I}_1 \times \mathnormal{I}_2 \times \cdots \times \mathnormal{I}_N}$ is a rank-one tensor if it can be written as the outer product of $N$ vectors, i.e.,
$$\mathcal{T} = \bold{a}^{(1)} \circ  \bold{a}^{(2)} \circ \cdots \circ  \bold{a}^{(N)},$$
where $\bold{a}^{(r)} \in \mathbb{R}^{I_r \times 1}, 1 \leq r \leq N$. The symbol `` $\circ$" represents the vector outer product. This means that each element of the tensor is the product of the corresponding vector elements: 
$$t_{i_1 i_2 \cdots i_N} = a_{i_1}^{(1)}a_{i_2}^{(2)} \cdots a_{i_N}^{(N)}, \quad \text{for all} \; 1 \leq i_n \leq I_n.$$
\end{definition}

\section{ALS and Nonlinear Block Gauss-Seidel Method}\label{alswithgs}

In 1927, Hitchcock \cite{Hitch1} \cite{Hitch2} proposed the idea of the polyadic form of a tensor, i.e., expressing a tensor as the sum of a finite number of rank-one tensors. Currently, this decomposition is called the CANDECOMP/PARAFAC (CP) decomposition. The Parallel Factor Decomposition (PARAFAC) first appeared in \cite{RAH} in the context of psychometrics. Independently, \cite{JJ} introduced this decomposition as the Canonical Decomposition (CANDECOMP) in phonetics. The work of Kruskal in 1977 \cite{JB} \cite{JBK} provided
 a sufficient condition,
\[ I + J + K \geq 2R + 2,   \]
where  $I$, $J$ and $K$ denote the {\em k-rank} (defined as the maximum value $k$ such that any $k$ columns are linearly independent in a matrix) of matrices $\bold{A}$, $\bold{B}$ and $\bold{C}$ respectively, for uniqueness up to permutation and scalings of CP. Later on, De Lathauwer \cite{JustLieven} and Jiang and Sidiropoulous \cite{Jiang} gave new sufficient conditions for uniqueness by assuming only one full-rank factor with the new bound,
\[\frac{R(R-1)}{2} \leq \frac{I(I-1)J(J-1)}{4}.     \]
Also, some constraints on the factor matrices of the CP are considered by requiring all the columns in each factor matrix to be orthonormal. This condition is useful in applications like independent component analysis (ICA) \cite{JF}. 

In terms of numerical methods for computing CP decomposition, there are several methods (see e.g. \cite{GR}) to solve CP decomposition of a given tensor. The ALS method is the most popular technique. We will discuss the connection of ALS to the nonlinear block Gauss-Seidel (GS) method  \cite{DPB} \cite{LM}. This connection is important since it is a well-known fact that the GS method does not necessarily converge, leading us to further discuss some convergence results of the ALS algorithm. %will be in the following sections. %this and the next sections.%Thus, we will discuss some convergence results.

\subsection{Alternating Least Squares}
For the simplicity of the exposition, we looked at  third-order tensors, but all the analysis holds for higher-order tensors. For a given third-order tensor $\mathcal{T} \in \mathbb{R}^{\mathnormal{I} \times \mathnormal{J} \times \mathnormal{K}}$, its CP decomposition is
\begin{eqnarray} \label{decomp1}
\mathcal{T}\approx \sum_{r=1}^{R} \bold{a_r} \circ \bold{b_r} \circ \bold{c_r}.
\end{eqnarray}

The {\em factor matrices} are the combination of the vectors from the rank-one components; i.e., $\bold{A}=[\bold{a_1}, \bold{a_2}, \cdots, \bold{a_{R}}] \in \mathbb{R}^{I\times R}$, $\bold{B}=[\bold{b_1}, \bold{b_2}, \cdots, \bold{b_R}] \in \mathbb{R}^{J \times R}$ and $\bold{C}=[\bold{c_1}, \bold{c_2}, \cdots, \bold{c_R}] \in \mathbb{R}^{K\times R}$ where $R$ is called the {\em rank} of the tensor $\mathcal{T}$ denoted by $R=\mbox{rank}(\mathcal{T})$.

The problem we want to solve is the following: given a third-order tensor $\mathcal{T} \in \mathbb{R}^{I\times J\times K}$, compute its CP decomposition with $R$ components of rank-one tensors that best approximates $\mathcal{T}$, i.e.,
\begin{eqnarray} \label{alsprob}
\displaystyle\mathop{\mathrm{minimize}}_{\widehat{\mathcal{T}}} \quad \Vert\mathcal{T}-\widehat{\mathcal{T}}\Vert_F^2,\quad \text{where} \quad \widehat{\mathcal{T}}=\sum_{r=1}^{R}\bold{a_r} \circ \bold{b_r} \circ \bold{c_r},
\end{eqnarray}
and $\Vert \cdot \Vert_F^2$ is the Frobenius norm. The problem is equivalent to
\begin{eqnarray}\label{alsprob2}
\displaystyle\mathop{\mathrm{min}}_{\bold{A},\bold{B},\bold{C}} \quad \Vert\mathcal{T}-\sum_{r=1}^{R}\bold{a_r} \circ \bold{b_r} \circ \bold{c_r}\Vert_F^2
\end{eqnarray}
with respect to factor matrices $\bold{A}$, $\bold{B}$ and $\bold{C}$.

By using the Khatri-Rao product and tensor matricization, (\ref{decomp1}) can be written in three matricized forms:
\begin{eqnarray*}
&&\bold{T_{(1)}}\approx \bold{A}(\bold{C}\odot\bold{B})^{\text{T}}, \\
&&\bold{T_{(2)}}\approx \bold{B}(\bold{C}\odot\bold{A})^{\text{T}}, \\
&&\bold{T_{(3)}}\approx \bold{C}(\bold{B}\odot\bold{A})^{\text{T}} .
\end{eqnarray*}

Then by fixing all factor matrices but one, the problem reduces to three coupled linear least-squares subproblems. Thus, ALS solves three least-squares subproblems to obtain the factor matrices through these subproblems: 

\begin{eqnarray}\label{als}
\bold{A}^{k+1}&=&\displaystyle\mathop{\mathrm{argmin}}_{\widehat{\bold{A}}\in \mathbb{R}^{I\times R}}\Vert\bold{T_{(1)}}^{I\times JK}-\widehat{\bold{A}}  (\bold{C}^{k}\odot\bold{B}^{k})^{\text{T}}\Vert_F^2, \notag \\
\bold{B}^{k+1}&=&\displaystyle\mathop{\mathrm{argmin}}_{\widehat{\bold{B}}\in \mathbb{R}^{J\times R}}\Vert\bold{T_{(2)}}^{J\times IK}-\widehat{\bold{B}}(\bold{C}^{k}\odot\bold{A}^{k+1})^{\text{T}}\Vert_F^2, \\
\bold{C}^{k+1}&=&\displaystyle\mathop{\mathrm{argmin}}_{\widehat{\bold{C}}\in \mathbb{R}^{K\times R}}\Vert\bold{T_{(3)}}^{K\times IJ}-\widehat{\bold{C}}(\bold{B}^{k+1}\odot\bold{A}^{k+1})^{\text{T}}\Vert_F^2,  \notag
\end{eqnarray}
where $\bold{T_{(1)}}^{I\times JK}$, $\bold{T_{(2)}}^{J\times IK}$ and $\bold{T_{(3)}}^{K\times IJ}$ are the mode-1, mode-2 and mode-3 matricizations of tensor $\mathcal{T}$. So starting from an initial guess $\bold{A}^0$, $\bold{B}^0$, $\bold{C}^0$, the ALS approach fixes $\bold{B}$ and $\bold{C}$ to solve for $\bold{A}$, then fixes $\bold{A}$ and $\bold{C}$ to solve for $\bold{B}$, and then fixes $\bold{A}$ and $\bold{B}$ to solve for $\bold{C}$. This process continues iteratively until some convergence criterion is satisfied. Therefore, this method translates the original nonlinear minimization problem to three subproblems where each one is just a least-squares problem. 

\subsection{Block Nonlinear Gauss-Seidel Method}\label{bngsm}

In this section, we want to introduce the nonlinear block Gauss-Seidel method  \cite{AAA} \cite{DPB} \cite{DPP} \cite{LM} \cite{LGM}, a technique that is used to find a minimizer of a nonlinear functional. We will see that ALS is a special case of GS. 

Consider such a problem:
\begin{eqnarray}\label{GS}
&&\text{minimize} \quad f(\bold{x})  \\
&&\text{subject to} \quad \bold{x}\in X = X_1 \times X_2 \times \cdots X_m \subseteq \mathbb{R}^n, \notag
\end{eqnarray}
where $f$ be a continuously differentiable function from $\mathbb{R}^n$ to $\mathbb{R}$ and $X$ is the cartesian product of closed, nonempty and convex subsets $X_i \subseteq \mathbb{R}^{n_i}$, for $i=1,\dots, m$, with $\displaystyle\sum_{i=1}^{m}n_i = n$. If the vector $\bold{x} \in \mathbb{R}^n$ is partitioned into $m$ component vectors $\bold{x_i} \in \mathbb{R}^{n_i}$, then we can consider $f$ is a function from $\mathbb{R}^{n_1} \times \mathbb{R}^{n_2} \times \cdots \mathbb{R}^{n_m}$ to $\mathbb{R}$ with
$$f(\bold{x})=f(\bold{x_1}, \bold{x_2}, \cdots, \bold{x_m}).$$
The minimization of the block nonlinear Gauss-Seidel method for the solution (\ref{GS}) is defined by the iteration,
$$\bold{x_i}^{k+1}=\displaystyle\mathop{\mathrm{argmin}}_{\bold{y_i} \in X_i} f(\bold{x_1}^{k+1}, \dots, \bold{x_{i-1}}^{k+1}, \bold{y_i}, \bold{x_{i+1}}^{k}, \dots, \bold{x_m}^{k}),$$
which in turn updates the components of $\bold{x}$, starting from a given initial guess $\bold{x}^0 \in X$ and generating a sequence $\{\bold{x}^k\}=\{(\bold{x_1}^k, \bold{x_2}^k, \dots, \bold{x_m}^k)\}$.

We introduce the following definitions to facilitate our discussion on the connection between GS and ALS.
\begin{definition}[Vectorization]
The vectorization of a matrix 
$$\bold{M} =[\bold{m_1}, \bold{m_2},\cdots, \bold{m_n}] \in \mathbb{R}^{m \times n},$$
 where $\bold{m_i}$ is the $i$-th column of $\bold{M}$, is denoted by $vec(\bold{M})$ which is a vector of size $(mn)$ defined by
$$vec(\bold{M})=\begin{bmatrix}
\bold{m_1} \\
\bold{m_2} \\
\vdots \\
\bold{m_n}
\end{bmatrix}.$$
\end{definition}

From the PARAFAC formulation (\ref{alsprob}) and the definition of rank-one tensor, the cost function we want to minimize is
$$\Vert\mathcal{T}-\widehat{\mathcal{T}}\Vert_F^2=\displaystyle\sum_{k=1}^{K}\sum_{j=1}^{J}\sum_{i=1}^{I}(t_{ijk}-\sum_{r=1}^{R}a_{ir}b_{jr}c_{kr})^2=f(\bold{A},\bold{B},\bold{C}),$$
where the cost function is a function s.t. $f: \mathbb{R}^n \rightarrow \mathbb{R}$, $n=(I+J+K)R$. Let $\bold{x}=vec([vec(\bold{A}), vec(\bold{B}), vec(\bold{C})]) \in \mathbb{R}^n$, it is obvious that 
$$f(\bold{x})=f(\bold{A},\bold{B},\bold{C})=\sum_{k=1}^{K}\sum_{j=1}^{J}\sum_{i=1}^{I}(t_{ijk}-\sum_{r=1}^{R}a_{ir}b_{jr}c_{kr})^2.$$ 
Let $vec(\bold{A})=\bold{x_1}\in \mathbb{R}^{IR}$, $vec(\bold{B})=\bold{x_2}\in \mathbb{R}^{JR}$ and $vec(\bold{C})=\bold{x_3}\in \mathbb{R}^{KR}$ so that we partition the vector $\bold{x} \in \mathbb{R}^n$ into 3 component vectors $\bold{x_i}\in \mathbb{R}^{n_i}$, $i=1,2,3$. $n_1=IR$, $n_2=JR$ and $n_3=KR$. It follows that $\bold{x}=\bold{x_1} \times \bold{x_2} \times \bold{x_3} \in \mathbb{R}^{n_1} \times \mathbb{R}^{n_2} \times \mathbb{R}^{n_3}=\mathbb{R}^n$. Thus, the CP decomposition can be reformulated to the following problem:
%Therefore, the function value $f(\bold{x})$ is also indicated by $f(\bold{x_1}, \bold{x_2}, \bold{x_3})$. Thus, $f(\bold{x_1}, \bold{x_2}, \bold{x_3})=F(\bold{A}, \bold{B}, \bold{C})$. 
\begin{eqnarray}\label{GSA}
&&\text{minimize} \quad f(\bold{x})  \\
&&\text{subject to} \quad \bold{x}\in \mathbb{R}^{n_1} \times \mathbb{R}^{n_2} \times \mathbb{R}^{n_3}=\mathbb{R}^n.\notag
\end{eqnarray}
From the ALS algorithm, the updates are in terms of components of $\bold{x}$, starting from a given initial point $\bold{x}^0 = vec ([vec(\bold{A}^0), vec(\bold{B}^0), vec(\bold{C}^0)]) \in \mathbb{R}^{n}$ and generates a sequence $\{(\bold{x_1}^k, \bold{x_2}^k, \bold{x_3}^k)\}$ by the following: 
\begin{eqnarray*}
&\bold{x_{1}}^{k+1}&=\displaystyle\mathop{\mathrm{argmin}}_{\bold{y_1} \in \mathbb{R}^{n_1}}f(\bold{y_1}, \bold{x_{2}}^{k}, \bold{x_{3}}^{k}),\\
&\bold{x_{2}}^{k+1}&=\displaystyle\mathop{\mathrm{argmin}}_{\bold{y_2} \in \mathbb{R}^{n_2}}f(\bold{x_{1}}^{k+1}, \bold{y_2}, \bold{x_{3}}^{k}),\\
&\bold{x_{3}}^{k+1}&=\displaystyle\mathop{\mathrm{argmin}}_{\bold{y_3} \in \mathbb{R}^{n_3}}f(\bold{x_{1}}^{k+1}, \bold{x_{2}}^{k+1}, \bold{y_3}).
\end{eqnarray*}
Notice that this is the exact GS method to solve the problem (\ref{GSA}). Therefore, the ALS algorithm is the block nonlinear Gauss-Seidel method for solving the CP decomposition of a given tensor. 

\subsection{Some Analysis about ALS}\label{analysis-als}

Since we already know that the ALS method coincides with the GS method, we can bring some GS results to analyze the ALS algorithm. %Let us look at some definitions first. 
\begin{definition}[Critical Point]
Let $f:  X \rightarrow \mathbb{R}$, $X \subset \mathbb{R}^n$ is a continuously differentiable function, a {\bf critical point} of $f$ is a point $\overline{\bold{x}} \in X$ such that 
\begin{eqnarray}\label{condition}
\triangledown f(\overline{\bold{x}})^{\text{T}}(\bold{y}-\overline{\bold{x}}) \geq 0, \quad \forall \; \bold{y} \in X,
\end{eqnarray}
where $\triangledown f(\bold{x}) \in \mathbb{R}^{n}$ denotes the gradient of $f$ at $x$ and $\triangledown f(\overline{\bold{x}})^{\text{T}}$ is the transposition of it. If $X = \mathbb{R}^n$ or if  $\overline{\bold{x}}$ is an iterior point of $X$, then the condition (\ref{condition}) reduces to the stationarity condition $\triangledown f(\overline{\bold{x}})=\bold{0}$ of unconstrained optimization.
\end{definition}

\begin{theorem}[Optimality Condition]\label{opt}
(a) If $\overline{\bold{x}}$ is a local minimum of $f$ over $X$, then it satisfies the optimality condition (\ref{condition}), i.e.,
$$\triangledown f(\overline{\bold{x}})^{\text{T}}(\bold{x}-\overline{\bold{x}}) \geq 0, \quad \forall \; \bold{x} \in X.$$

(b) If $f$ is convex over $X$, then the condition of part (a) is also sufficient for $\overline{\bold{x}}$ to minimize $f$ over $X$.

If $X=\mathbb{R}^n$ or if $\overline{\bold{x}}$ is an interior point of $X$, then the condition reduces to $\triangledown f(\overline{\bold{x}})=\bold{0}$.
\end{theorem}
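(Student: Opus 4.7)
The statement is the classical first-order optimality condition on a convex constraint set, so my plan is to reduce each part to a one-dimensional calculus argument along line segments in $X$, using convexity of $X$ to stay inside the feasible set.

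For part (a), I will fix an arbitrary $\bold{x} \in X$ and exploit convexity of $X$ to form the line segment $\bold{x}(t) := \overline{\bold{x}} + t(\bold{x} - \overline{\bold{x}})$, which lies in $X$ for all $t \in [0,1]$. Defining $g(t) := f(\bold{x}(t))$, the assumption that $\overline{\bold{x}}$ is a local minimum over $X$ implies $g(t) \geq g(0)$ for all sufficiently small $t > 0$, hence $g'(0) \geq 0$. The chain rule gives $g'(0) = \nabla f(\overline{\bold{x}})^{\text{T}}(\bold{x} - \overline{\bold{x}})$, which is exactly condition (\ref{condition}). Since $\bold{x}$ was arbitrary, part (a) follows.

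For part (b), I will start from condition (\ref{condition}) and use the standard characterization of differentiable convex functions: $f$ convex on $X$ is equivalent to the first-order inequality
\begin{equation*}
f(\bold{x}) \geq f(\overline{\bold{x}}) + \nabla f(\overline{\bold{x}})^{\text{T}}(\bold{x} - \overline{\bold{x}}) \quad \text{for all } \bold{x} \in X.
\end{equation*}
Combined with $\nabla f(\overline{\bold{x}})^{\text{T}}(\bold{x} - \overline{\bold{x}}) \geq 0$ from (\ref{condition}), this gives $f(\bold{x}) \geq f(\overline{\bold{x}})$ for every $\bold{x} \in X$, i.e., $\overline{\bold{x}}$ is a global (hence local) minimizer. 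I will either cite this first-order characterization of convexity from a standard reference or sketch it in one line via a secant-slope argument on the segment $\bold{x}(t)$.

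For the final assertion concerning interior points (and $X = \R^n$), I will note that if $\overline{\bold{x}}$ lies in the interior of $X$, then for every direction $\bold{v} \in \R^n$ there exists $\varepsilon > 0$ with $\overline{\bold{x}} \pm \varepsilon \bold{v} \in X$. Applying (\ref{condition}) to both $\bold{x} = \overline{\bold{x}} + \varepsilon \bold{v}$ and $\bold{x} = \overline{\bold{x}} - \varepsilon \bold{v}$ yields $\nabla f(\overline{\bold{x}})^{\text{T}} \bold{v} \geq 0$ and $-\nabla f(\overline{\bold{x}})^{\text{T}} \bold{v} \geq 0$, forcing $\nabla f(\overline{\bold{x}})^{\text{T}} \bold{v} = 0$ for all $\bold{v}$, so $\nabla f(\overline{\bold{x}}) = \bold{0}$. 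There is no real obstacle here — the only subtlety is to be careful in part (a) to pass from the local-minimum inequality $g(t) \geq g(0)$ to the sign of the one-sided derivative $g'(0^+)$, which is legitimate because $g$ is differentiable at $0$ by the chain rule and $C^1$ assumption on $f$.
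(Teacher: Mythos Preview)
Your argument is correct and is the standard textbook proof of the first-order necessary and sufficient conditions over a convex set. Note, however, that the paper does not actually prove Theorem~\ref{opt}: it is stated without proof as a classical result from nonlinear programming (see, e.g., the reference \cite{DPB} cited elsewhere in the paper). So there is nothing to compare against; you have simply supplied the standard proof where the paper chose to quote the result.
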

\begin{definition}[Limit Point]
We say that a vector $\bold{x} \in \mathbb{R}^n$ is a {\bf limit point} of a sequence $\{\bold{x}^k\}_{k=1}^{\infty}$ in $\mathbb{R}^n$ if there exists a subsequence of $\{\bold{x}^k\}_{k=1}^{\infty}$ that converges to $\bold{x}$.
\end{definition}

\begin{definition}[Convex Function]
A real-valued function $f(x)$ defined on a convex subset is called {\bf convex} if for any two points $x_1$ and $x_2$, in its domain and any $t \in [0, 1]$, 
$$f(tx_1 + (1-t)x_2) \leq tf(x_1)+(1-t)f(x_2).$$
If furthermore, 
$$f(tx_1 + (1-t)x_2) < tf(x_1)+(1-t)f(x_2),$$
$x_1 \neq x_2$, then $f$ is {\bf strictly convex}.
\end{definition}

\begin{definition}[Quasiconvex Function]\label{quasi}
A function $f: S \rightarrow \mathbb{R}$ defined on a convex subset S of a real vector space is {\bf quasiconvex} if whenever $x,y \in S$ and $\lambda \in [0,1]$, then
$$f(\lambda x+(1-\lambda)y) \leq max (f(x), f(y)).$$
If furthermore,
$$f(\lambda x+(1-\lambda)y) < max (f(x), f(y)),$$
 $x \neq y$, then $f$ is {\bf strictly quasiconvex}.

Consider the function $f$ in (\ref{GS}), which is defined on a subset $X=X_1 \times X_2 \times \cdots \times X_m$, we say that $f$ is quasiconvex with respect to $x_i \in X_i$ on $X$ if for every $x \in X$ and $y_i \in X_i$, we have 
$$f(x_1, \dots, tx_i + (1-t)y_i, \dots, x_m) \leq max\{f(x), f(x_1, \dots, y_i, \dots, x_m)\},$$
for all $t\in (0,1)$. If furthermore, 
$$f(x_1, \dots, tx_i + (1-t)y_i, \dots, x_m) < max\{f(x), f(x_1, \dots, y_i, \dots, x_m)\},$$
$y_i \ne x_i$, then $f$ is strictly quasiconvex.

\end{definition}

\begin{figure}[h]
\centering
\subfloat[Quasiconvex function, but not convex]{\label{fig: quasi}\includegraphics[height=4cm]{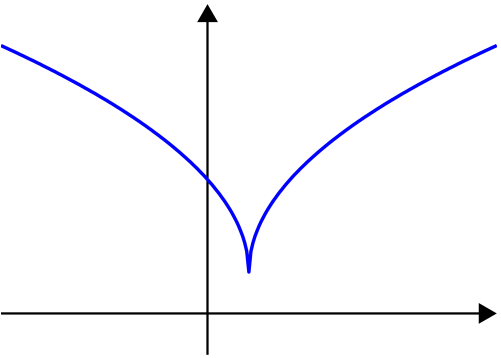}}
\qquad\qquad
\subfloat[Not a quasiconvex function]{\label{fig:unquasi}\includegraphics[height=4cm]{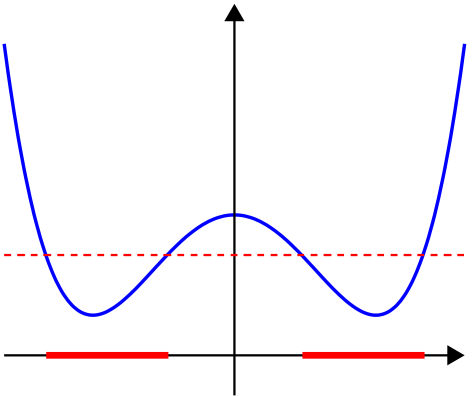}}
\caption{}
\label{quasiconvex}
\end{figure}

%\begin{center}
%\footnotesize{Figure 1: Quasiconvex function, but not convex}
%\hskip1cm
%\footnotesize{Figure 2: Not a quasiconvex function}
%\end{center}

%Now, let us talk about some convergence results for the block GS method. 
The convergence of the GS method is studied under different assumptions (see e.g. \cite{AAA} \cite{DPB} \cite{LM} \cite{LGM}).

\begin{theorem}[see \cite{LM}]\label{quasiconvexthm}
Suppose that the function $f$ in (\ref{GS}) is strictly quasiconvex with respect to $\bold{x_i}$ on $X$, for each $i=1, \dots, m-2$ in the sense of Definition \ref{quasi} and that the sequence $\{\bold{x}^k\}$ generated by the GS method has limit points. Then, every limit point $\overline{x}$ of $\{\bold{x}^k\}$ is a critical point of (\ref{GS}).
\end{theorem}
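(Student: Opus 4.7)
The plan is to show that every limit point $\bar{\bold{x}}$ of $\{\bold{x}^k\}$ satisfies block-wise optimality, namely that $\bar{\bold{x}_i}$ minimizes $\bold{y_i}\mapsto f(\bar{\bold{x}_1},\ldots,\bold{y_i},\ldots,\bar{\bold{x}_m})$ over $X_i$ for every $i$; by Theorem~\ref{opt}(a) applied blockwise, this is equivalent to the critical-point condition (\ref{condition}). The starting point is the descent identity
\[
f(\bold{x}^{k+1})\le f(\bold{x_1}^{k+1},\ldots,\bold{x_{m-1}}^{k+1},\bold{x_m}^k)\le\cdots\le f(\bold{x}^k),
\]
valid because each block update is itself a minimization. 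Combined with $\bold{x}^{k_j}\to\bar{\bold{x}}$ and continuity of $f$, the monotonically non-increasing sequence $\{f(\bold{x}^k)\}$ converges to $f(\bar{\bold{x}})$. Introducing the intermediate iterates $\bold{z}^j_i:=(\bold{x_1}^{k_j+1},\ldots,\bold{x_i}^{k_j+1},\bold{x_{i+1}}^{k_j},\ldots,\bold{x_m}^{k_j})$, a sandwich between $f(\bold{x}^{k_j})$ and $f(\bold{x}^{k_j+1})$ forces $f(\bold{z}^j_i)\to f(\bar{\bold{x}})$ for every $i$.

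Block-$m$ optimality follows for free from the step-$k_j$ update: $f(\bold{x}^{k_j})\le f(\bold{x_1}^{k_j},\ldots,\bold{x_{m-1}}^{k_j},\bold{y_m})$ passes to $f(\bar{\bold{x}})\le f(\bar{\bold{x}_1},\ldots,\bar{\bold{x}_{m-1}},\bold{y_m})$ by continuity, no quasiconvexity required. The remaining blocks $i\le m-1$ rest on an induction on $i=1,\ldots,m-2$ showing $\bold{x_i}^{k_j+1}\to\bar{\bold{x}_i}$ along the same subsequence. Assume the conclusion for $\ell<i$; block-$i$ optimality of $\bar{\bold{x}}$ then follows by taking $j\to\infty$ in $f(\bold{z}^j_i)\le f(\bold{x_1}^{k_j+1},\ldots,\bold{x_{i-1}}^{k_j+1},\bold{y_i},\bold{x_{i+1}}^{k_j},\ldots,\bold{x_m}^{k_j})$. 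Arguing by contradiction for the induction step, suppose $\bold{x_i}^{k_j+1}\not\to\bar{\bold{x}_i}$: extract a sub-subsequence with $\|\bold{x_i}^{k_j+1}-\bar{\bold{x}_i}\|\ge\delta>0$ and normalize $s^j:=(\bold{x_i}^{k_j+1}-\bar{\bold{x}_i})/\|\bold{x_i}^{k_j+1}-\bar{\bold{x}_i}\|\to\bar s$ with $\|\bar s\|=1$. Fix $\epsilon\in(0,\delta)$; the point $\bar{\bold{x}_i}+\epsilon s^j$ lies on the segment from $\bar{\bold{x}_i}$ to $\bold{x_i}^{k_j+1}$, and block-$i$ minimality of $\bold{x_i}^{k_j+1}$ pins the maximum on the right side of the strict quasiconvexity inequality at $\bar{\bold{x}_i}$, giving
\[
f(\bold{x_1}^{k_j+1},\ldots,\bar{\bold{x}_i}+\epsilon s^j,\ldots,\bold{x_m}^{k_j}) < f(\bold{x_1}^{k_j+1},\ldots,\bar{\bold{x}_i},\ldots,\bold{x_m}^{k_j}).
\]
Passing $j\to\infty$ yields $f(\bar{\bold{x}_1},\ldots,\bar{\bold{x}_i}+\epsilon\bar s,\ldots,\bar{\bold{x}_m})\le f(\bar{\bold{x}})$; a second application of strict quasiconvexity on the segment from $\bar{\bold{x}_i}$ to $\bar{\bold{x}_i}+\epsilon\bar s$ sharpens this to a strict inequality at interior points, contradicting the block-$i$ optimality of $\bar{\bold{x}}$ just derived.

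Block-$(m-1)$ optimality is then a direct corollary: pass to the limit in the step-$(k_j+1)$ inequality for block $m-1$, whose right-hand side involves only $\bold{x_1}^{k_j+1},\ldots,\bold{x_{m-2}}^{k_j+1}$ (covered by the induction) together with $\bold{x_m}^{k_j}\to\bar{\bold{x}_m}$. This pinpoints why the quasiconvexity hypothesis is required exactly for $i=1,\ldots,m-2$: blocks $m-1$ and $m$ can be handled with continuity and the step-$k_j$ update respectively, and only the forward-propagating convergences $\bold{x_i}^{k_j+1}\to\bar{\bold{x}_i}$ for $i\le m-2$ genuinely need the extra structure. The main obstacle I anticipate is the contradiction step inside the induction: continuity alone would weaken the strict quasiconvexity inequality to a non-strict limit, so one must evaluate it at the fixed small displacement $\epsilon s^j$ rather than the full deviation, orchestrate a three-tier subsequence extraction (original $k_j$, bounded-away, and normalized-direction), and then invoke quasiconvexity a second time on the limit slice so that the strict decrease survives and collides cleanly with the monotone limit $f(\bold{x}^k)\to f(\bar{\bold{x}})$.
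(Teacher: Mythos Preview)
The paper does not supply its own proof of this theorem; it is quoted verbatim from Grippo and Sciandrone \cite{LM} with the tag ``see \cite{LM}'' and no argument is given. So there is nothing in the present paper to compare against.

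Evaluated on its own merits, your sketch is correct and is essentially the argument of \cite{LM}. The key moves---monotone descent forcing $f(\bold{z}^j_i)\to f(\bar{\bold{x}})$ by squeezing, free block-$m$ optimality from the last update of iteration $k_j$, the forward induction on $i\le m-2$ establishing both block-$i$ optimality and $\bold{x_i}^{k_j+1}\to\bar{\bold{x}_i}$, and the normalized-direction contradiction using strict quasiconvexity twice---are exactly the ingredients in Grippo--Sciandrone's proof. Your identification of why the hypothesis stops at $i=m-2$ (blocks $m$ and $m-1$ are handled by the previous iteration's last update and by the induction conclusion, respectively) is also on target. One small point worth making explicit when you write it out in full: after the contradiction sub-subsequence is extracted, you must note that $\bar{\bold{x}_i}+\epsilon\bar s\in X_i$ because each $\bar{\bold{x}_i}+\epsilon s^j$ lies on the segment $[\bar{\bold{x}_i},\bold{x_i}^{k_j+1}]\subset X_i$ and $X_i$ is closed; otherwise the second application of strict quasiconvexity at the limit is not justified.
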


\begin{theorem}[see \cite{DPB}]
Let $f$ be the function in (\ref{GS}). Suppose that for each $i$ and $\bold{x} \in X$, the minimum of
$$\displaystyle\min_{\bold{\xi} \in X_i}f(\bold{x_1}, \dots, \bold{x_{i-1}}, \bold{\xi}, \bold{x_{i+1}}, \dots, \bold{x_m})$$
is uniquely attained. If $\bold{x}^k$ is the sequence generated by GS, then every limit point of $\bold{x}^k$ is a critical point.
\end{theorem}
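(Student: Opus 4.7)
The plan is to mimic the standard block-Gauss--Seidel analysis, using uniqueness of the block minimizer in place of the quasiconvexity hypothesis of Theorem~\ref{quasiconvexthm}. First, I would establish monotonicity of the objective: introduce the intermediate iterates $\bold{z}_i^k := (\bold{x_1}^{k+1},\dots,\bold{x_i}^{k+1},\bold{x_{i+1}}^k,\dots,\bold{x_m}^k)$ (so that $\bold{z}_0^k=\bold{x}^k$ and $\bold{z}_m^k=\bold{x}^{k+1}$). By the GS update rule, $f(\bold{z}_i^k)\le f(\bold{z}_{i-1}^k)$ for every $i$, hence $\{f(\bold{x}^k)\}$ is nonincreasing. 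For a limit point $\overline{\bold{x}}$ with $\bold{x}^{k_j}\to\overline{\bold{x}}$, continuity gives $f(\bold{x}^{k_j})\to f(\overline{\bold{x}})$, and monotonicity upgrades this to $f(\bold{x}^k)\to f(\overline{\bold{x}})$ for the whole sequence. Squeezing between $f(\bold{x}^{k_j})$ and $f(\bold{x}^{k_j+1})$ then forces $f(\bold{z}_i^{k_j})\to f(\overline{\bold{x}})$ for every $i$.

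The core step is to propagate convergence through one full GS sweep, block by block. For $i=1$, $\bold{x_1}^{k_j+1}$ is the unique minimizer of $\bold{y_1}\mapsto f(\bold{y_1},\bold{x_2}^{k_j},\dots,\bold{x_m}^{k_j})$ on $X_1$. Granting boundedness for the moment, extract $\bold{x_1}^{k_j+1}\to\widetilde{\bold{x}}_1$ along a subsubsequence and pass to the limit in $f(\bold{x_1}^{k_j+1},\bold{x_2}^{k_j},\dots,\bold{x_m}^{k_j})\le f(\bold{y_1},\bold{x_2}^{k_j},\dots,\bold{x_m}^{k_j})$ to obtain $f(\widetilde{\bold{x}}_1,\overline{\bold{x}}_2,\dots,\overline{\bold{x}}_m)\le f(\bold{y_1},\overline{\bold{x}}_2,\dots,\overline{\bold{x}}_m)$ for every $\bold{y_1}\in X_1$. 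The left-hand side equals $\lim f(\bold{z}_1^{k_j})=f(\overline{\bold{x}})=f(\overline{\bold{x}}_1,\overline{\bold{x}}_2,\dots,\overline{\bold{x}}_m)$, so $\overline{\bold{x}}_1$ attains the block minimum as well. The uniqueness hypothesis then forces $\widetilde{\bold{x}}_1=\overline{\bold{x}}_1$, and since every subsubsequential limit coincides with $\overline{\bold{x}}_1$, one concludes $\bold{x_1}^{k_j+1}\to\overline{\bold{x}}_1$. Repeating the argument for $i=2,\dots,m$, using the already-secured convergence of the earlier updated blocks, delivers $\bold{x}^{k_j+1}\to\overline{\bold{x}}$.

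With block-wise convergence in hand, passing to the limit in the $i$-th minimizing inequality shows that $\overline{\bold{x}}_i$ minimizes $f(\overline{\bold{x}}_1,\dots,\overline{\bold{x}}_{i-1},\cdot,\overline{\bold{x}}_{i+1},\dots,\overline{\bold{x}}_m)$ over $X_i$. By Theorem~\ref{opt}(a) applied block by block, $\triangledown_{\bold{x_i}} f(\overline{\bold{x}})^{\text{T}}(\bold{y_i}-\overline{\bold{x}}_i)\ge 0$ for every $\bold{y_i}\in X_i$, and summing these $m$ inequalities yields the critical-point condition (\ref{condition}) for an arbitrary $\bold{y}=(\bold{y_1},\dots,\bold{y_m})\in X$.

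The main obstacle is the boundedness needed to extract $\widetilde{\bold{x}}_i$: since each $X_i$ is only assumed closed and convex, the block minimizers could a priori escape to infinity even though $\bold{x}^{k_j}$ itself is bounded. I would handle this via the normalization device from Bertsekas' proof of Proposition~2.7.1: if $s^{k_j}:=\|\bold{x_1}^{k_j+1}-\overline{\bold{x}}_1\|\to\infty$, set $d^{k_j}:=(\bold{x_1}^{k_j+1}-\overline{\bold{x}}_1)/s^{k_j}$, extract a limit direction $d$ with $\|d\|=1$, and compare $f$ at the point $\overline{\bold{x}}_1+\epsilon d^{k_j}$ (which lies on the segment from $\overline{\bold{x}}_1$ to the minimizer for $\epsilon\le s^{k_j}$) against the minimum: continuous differentiability and the strict descent that uniqueness of the block minimizer affords then contradict the convergence $f(\bold{z}_1^{k_j})\to f(\overline{\bold{x}})$. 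This normalization step is the only delicate analytic ingredient; everything else reduces to continuity, monotonicity, and the uniqueness of the block minima.
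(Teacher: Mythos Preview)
The paper does not prove this theorem; it is quoted from Bertsekas \cite{DPB} without argument, so there is no in-paper proof to compare against. Your outline is exactly the standard proof, and the first three stages---monotonicity of $\{f(\bold{x}^k)\}$, the squeeze giving $f(\bold{z}_i^{k_j})\to f(\overline{\bold{x}})$, and the block-by-block propagation of subsequential convergence using uniqueness of the block minimizer---are correct and cleanly stated.

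The gap is in the normalization step for boundedness. Passing to the limit in the minimizing inequality already shows that $\overline{\bold{x}}_1$ minimizes $g(\xi):=f(\xi,\overline{\bold{x}}_2,\dots,\overline{\bold{x}}_m)$; your comparison at $\overline{\bold{x}}_1+\epsilon d^{k_j}$ then only yields $f(\overline{\bold{x}})\le g(\overline{\bold{x}}_1+\epsilon d)$ in the limit, which is \emph{consistent} with $\overline{\bold{x}}_1$ being the unique minimizer rather than contradicting anything. What is missing is an opposing upper bound forcing $g(\overline{\bold{x}}_1+\epsilon d)\le f(\overline{\bold{x}})$, so that equality would violate uniqueness. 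In Bertsekas' actual Proposition~2.7.1 the hypothesis includes, beyond unique attainment, that each block function is monotonically nonincreasing along the segment from the current block iterate to its minimizer; applying this at the point $\bold{x_1}^{k_j}+\epsilon\,\tilde d^{k_j}$ on the segment from $\bold{x_1}^{k_j}$ to $\bold{x_1}^{k_j+1}$ supplies exactly the needed upper bound $f(\bold{x_1}^{k_j}+\epsilon\,\tilde d^{k_j},\bold{x_2}^{k_j},\dots)\le f(\bold{x}^{k_j})\to f(\overline{\bold{x}})$. The paper's restatement drops that clause, and your sketch inherits the omission. For the ALS functional the block subproblems are convex quadratics, so segment monotonicity holds automatically and the repair is free; but for the theorem in the generality stated you must either restore Bertsekas' extra hypothesis, assume the $X_i$ are compact, or impose some block-wise coercivity.
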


These theorems show that the GS method can produce a converging sequence with limit points that are critical points of the problem. But, in general, the GS method may not converge, in the sense that it may produce a sequence with limit points that are not critical points of the problem. A counterexample of Powell \cite{MJD} (see also \cite{LM}) shows that for a non-convex function that is component-wise convex but not strictly quasiconvex with respect to each component, its limit points need not be critical points.

Comparing these convergence results for the GS method with the least-squares cost functionals, we observe that neither of the hypothesis in \cite{DPB} or \cite{LM} are satisfied. Indeed, the least-squares cost functional is convex (even quadratic) in each component and therefore, quasiconvex. However, in the case that the  Kathri-Rao product of two factor matrices involved is rank deficient, then the least-squares function will not be strictly quasiconvex (see the following proposition). %These facts give some evidences that ALS cannot guarantee converge.

\begin{proposition}
Let $f(x)=\Vert \bold{A}\bold{x}-\bold{b}\Vert^2$ where $\bold{A} \in \mathbb{R}^{m \times n}$, $m>n$, $\bold{x} \in \mathbb{R}^{n \times 1}$ and $\bold{b} \in \mathbb{R}^{m \times 1}$. If $\bold{A}$ is rank deficient, then $f(\bold{x})$ is not strictly convex.
\end{proposition}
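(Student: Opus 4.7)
My plan is to exploit the null space of $\mathbf{A}$ directly. Since $\mathbf{A}$ is rank deficient, the column rank of $\mathbf{A}$ is strictly less than $n$, so by the rank–nullity theorem there exists a nonzero vector $\mathbf{v}\in\mathbb{R}^{n\times 1}$ with $\mathbf{A}\mathbf{v}=\mathbf{0}$. The idea is to use this $\mathbf{v}$ to produce two distinct points in $\mathbb{R}^n$ on which $f$ takes the same value and along whose connecting segment $f$ is constant, contradicting the strict convexity inequality.

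Concretely, I would fix any $\mathbf{x}_1\in\mathbb{R}^n$ and set $\mathbf{x}_2=\mathbf{x}_1+\mathbf{v}$, so $\mathbf{x}_1\neq \mathbf{x}_2$ but $\mathbf{A}\mathbf{x}_1=\mathbf{A}\mathbf{x}_2$, which immediately gives $f(\mathbf{x}_1)=f(\mathbf{x}_2)$. For any $t\in(0,1)$ I would then compute
\begin{equation*}
\mathbf{A}\bigl(t\mathbf{x}_1+(1-t)\mathbf{x}_2\bigr)=t\mathbf{A}\mathbf{x}_1+(1-t)\mathbf{A}\mathbf{x}_2=\mathbf{A}\mathbf{x}_1,
\end{equation*}
so $f\bigl(t\mathbf{x}_1+(1-t)\mathbf{x}_2\bigr)=f(\mathbf{x}_1)=tf(\mathbf{x}_1)+(1-t)f(\mathbf{x}_2)$. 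The strict inequality in the definition of strict convexity therefore fails at the pair $(\mathbf{x}_1,\mathbf{x}_2)$, and $f$ is not strictly convex.

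As a sanity check, I would note the equivalent Hessian viewpoint: $\nabla^2 f(\mathbf{x})=2\mathbf{A}^{\mathrm{T}}\mathbf{A}$ is positive semidefinite, but when $\mathbf{A}$ is rank deficient it has a nontrivial kernel (namely any null vector $\mathbf{v}$ of $\mathbf{A}$ satisfies $\mathbf{v}^{\mathrm{T}}\mathbf{A}^{\mathrm{T}}\mathbf{A}\mathbf{v}=0$), so the Hessian fails to be positive definite and $f$ cannot be strictly convex. There is essentially no obstacle here; the only thing worth being explicit about is the one-line invocation of rank–nullity that produces the null vector $\mathbf{v}$. The hypothesis $m>n$ plays no role in the argument—the conclusion uses only that $\mathbf{A}$ does not have full column rank.
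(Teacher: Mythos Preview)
Your argument is correct and follows essentially the same idea as the paper's proof: both exploit a nonzero null vector of $\mathbf{A}$ to produce two distinct points between which $f$ is constant, so the strict inequality fails. The only cosmetic difference is that the paper chooses both test points inside $\mathrm{Nul}\,\mathbf{A}$ (so $f\equiv\|\mathbf{b}\|^2$ there), whereas you translate an arbitrary point by a null vector; your added Hessian remark and the observation that $m>n$ is unused are accurate but not in the paper.
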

\begin{proof}
Since $\bold{A}$ is rank deficient, then assume $rank(\bold{A})=r$ which implies that $dim(Nul \bold{A})=n-r$.  Take $\bold{x}, \bold{\tilde{x}} \in Nul \bold{A}$ where $\bold{x}\ne\bold{\tilde{x}}$. Then, according to the definition of a strictly convex function, for any $t \in [0,1]$, $f(t\bold{x}+(1-t)\bold{\widetilde{x}})= \Vert\bold{A}[t\bold{x}+(1-t)\bold{\widetilde{x}}]-\bold{b}\Vert^2 =\Vert \bold{b} \Vert^2$ and
$tf(\bold{x})+(1-t)f(\bold{\widetilde{x}})=\Vert \bold{b} \Vert^2$. Thus, $f(t\bold{x}+(1-t)\bold{\widetilde{x}})= tf(\bold{x})+(1-t)f(\bold{\widetilde{x}})$. 
\end{proof}

It follows from the proposition above that $f$ is not a strictly quasiconvex function since $f(t\bold{x}+(1-t)\bold{\widetilde{x}})=f(\bold{x})=f(\bold{\widetilde{x}})$. Thus from Theorem \ref{quasiconvexthm}, a limit point of the ALS sequence is not guaranteed to be a critical point. 

The main difficulty in proving the convergence is the lack of strict (quasi)convexity in the case of rank deficient Khatri-Rao products. This indicates that one reason for the occurrence of swamps, namely if the Khatri-Rao products of at
least two of the three iteration matrices is almost singular, the associated
least-squares functional will be flat. Thus, we can expect slow convergence, verified by the plots in  Figures \ref{fig:fifth-order-tensor} and \ref{fig: singularity}.
We observe that the region of a swamp in the ALS method (the plateau in the left convergence plot) is strongly correlated with very small singular value of the Khatri-Rao product of $\bold{B}^k$ and $\bold{C}^k$.

\begin{figure}[here]
\centering
\subfloat[Fifth order tensor]{\label{fig:fifth-order-tensor}\includegraphics[height=4cm]{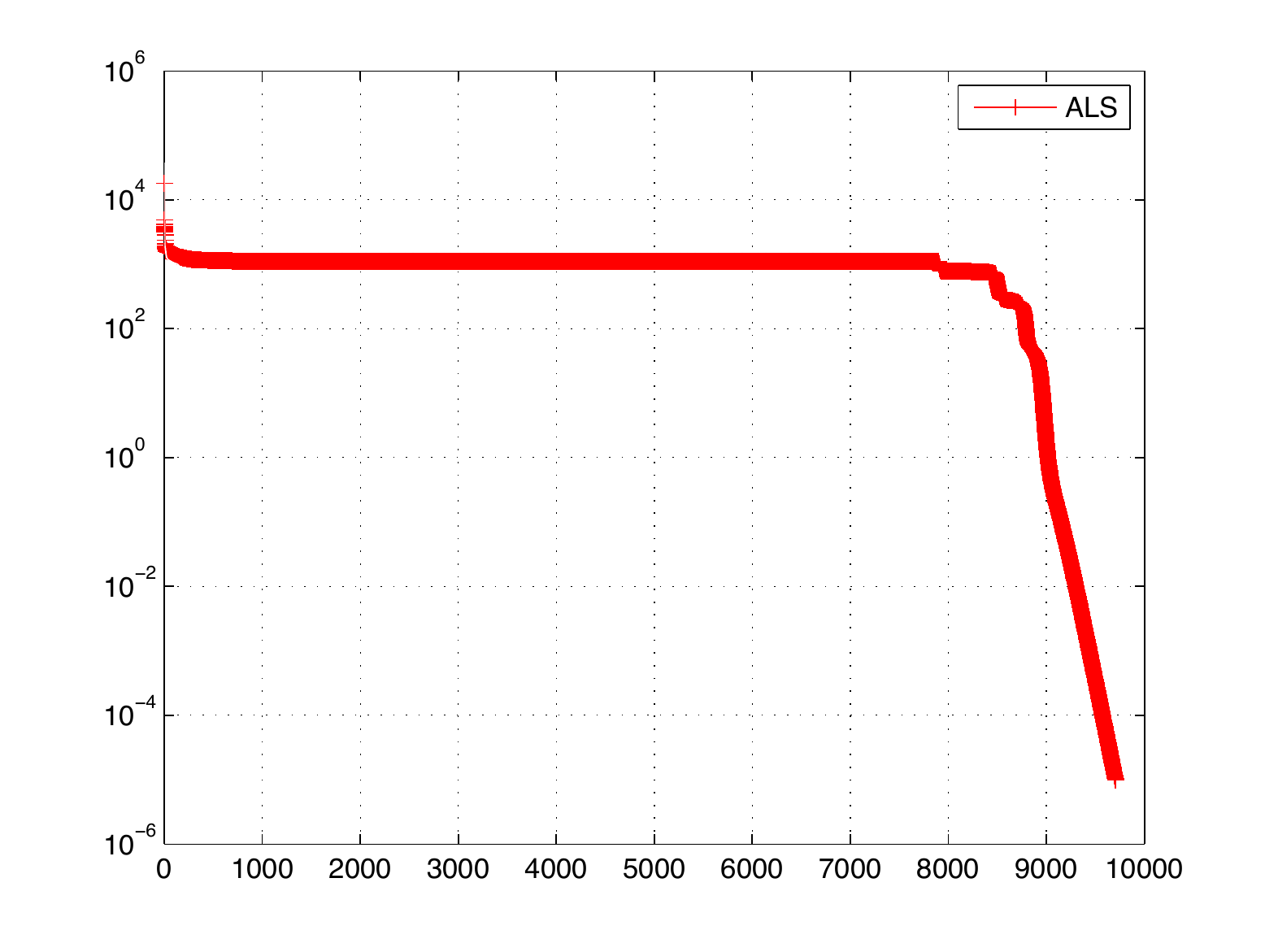}}
\qquad\qquad
\subfloat[Smallest singular values of $\bold{C}^k \odot \bold{B}^k$]{\label{fig: singularity}\includegraphics[height=4cm]{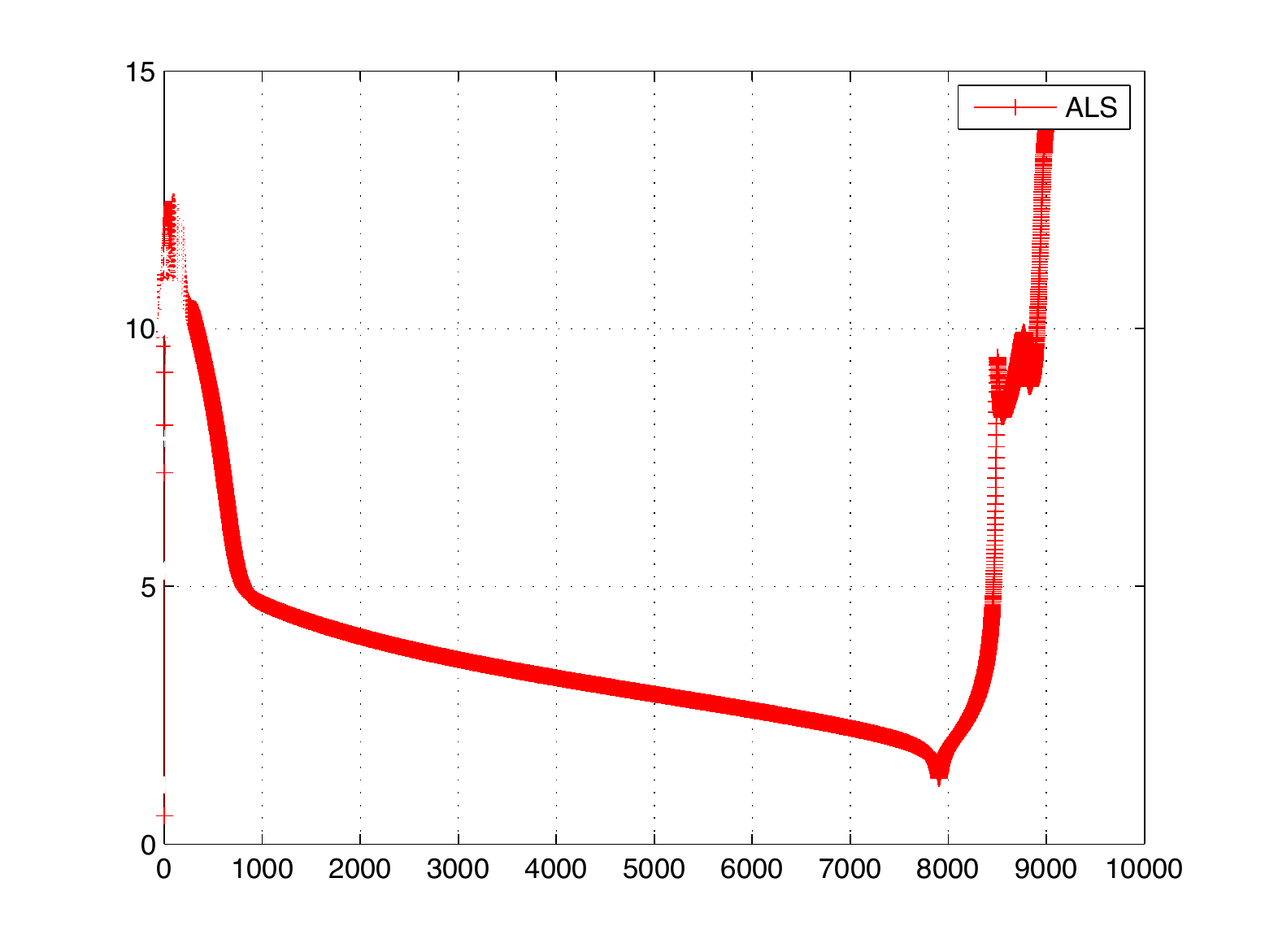}}
\caption{}
\label{fig:rank-defficient}
\end{figure}

%\begin{center}
%\hskip.5cm
%\footnotesize{Figure 3: Fifth order tensor}
%\hskip1.5cm
%\footnotesize{Figure 4: Smallest singular values of $\bold{C}^k \odot \bold{B}^k$}
%\end{center}

\section{The Regularized Alternating Least-Squares}\label{RALS}

%Start with review of ALS.
%Describe RALS. Include new examples.
%Essentially rewrite a short version of the EUSIPCO paper in this section (tex in the email.)

In the last section, we analyzed why ALS do not always converge through the properties of the GS method while examining RALS \cite{NDLK}, a proximal point modification of the Gauss-Seidel method (PGS) (see \cite{DPB} \cite{LM}) for tensor decomposition. The analysis provides some explanations on why RALS performs better than than ALS and decreases the high number of ALS iterations if there are swamp occurrences.

\subsection{Regularized ALS}

%Illumined from the study of nonlinear minimization problem, we can use similar techniques to handle the ALS. It is well-known that, in general, the GS method may not converge, so people consider a modification of GS by adding some extra item in each iteration:
%$$\bold{x_i}^{k+1}=\displaystyle\mathop{\mathrm{argmin}}_{\bold{y_i} \in X_i} f(\bold{x_1}^{k+1}, \dots, \bold{y_i}, \dots, \bold{x_m}^{k}) + \frac{1}{2} \tau_i \Vert \bold{y_i} - \bold{x_i}^k\Vert^2,$$
%this method is called \textbf{\em{A proximal point modification of the GS method (PGS)}} \cite{LM, DPB} (some called it as Partial Proximal Minimization \cite{DPP}).
The regularized ALS solves the same problem (\ref{alsprob}). It recasts the main problem to three subproblems for a third-order tensor. But RALS has an extra term in each subproblem. Therefore, in order to solve the problem: 
\begin{eqnarray}\label{alsprob2}
\displaystyle\mathop{\mathrm{min}}_{\bold{A},\bold{B},\bold{C}} \quad \Vert\mathcal{T}-\sum_{r=1}^{R}\bold{a_r} \circ \bold{b_r} \circ \bold{c_r}\Vert_F^2,
\end{eqnarray}
with respect to factor matrices $\bold{A}$, $\bold{B}$ and $\bold{C}$, for a given third-order tensor $\mathcal{T}$, here are RALS subproblems:
\begin{eqnarray}\label{rals}
\bold{A}^{k+1}&=&\displaystyle\mathop{\mathrm{argmin}}_{\widehat{\bold{A}}\in \mathbb{R}^{I\times R}}\Vert\bold{T_{(1)}}^{I\times JK}-\widehat{\bold{A}}  (\bold{C}^{k}\odot\bold{B}^{k})^\text{T}\Vert_F^2 + \lambda_{k}\Vert\bold{A}^{k}-\widehat{\bold{A}}\Vert_F^2,\notag \\
\bold{B}^{k+1}&=&\displaystyle\mathop{\mathrm{argmin}}_{\widehat{\bold{B}}\in \mathbb{R}^{J\times R}}\Vert\bold{T_{(2)}}^{J\times IK}-\widehat{\bold{B}}(\bold{C}^{k}\odot\bold{A}^{k+1})^\text{T}\Vert_F^2 + \lambda_{k}\Vert\bold{B}^{k}-\widehat{\bold{B}}\Vert_F^2,\\
\bold{C}^{k+1}&=&\displaystyle\mathop{\mathrm{argmin}}_{\widehat{\bold{C}}\in \mathbb{R}^{K\times R}}\Vert\bold{T_{(3)}}^{K\times IJ}-\widehat{\bold{C}}(\bold{B}^{k+1}\odot\bold{A}^{k+1})^\text{T}\Vert_F^2 + \lambda_{k}\Vert\bold{C}^{k}-\widehat{\bold{C}}\Vert_F^2, \notag
\end{eqnarray}
where $\lambda_k >0$ is the regularization parameter. The regularization terms $\lambda_{k}\Vert\bold{A}^{k}-\widehat{\bold{A}}\Vert_F^2$, $\lambda_{k}\Vert\bold{B}^{k}-\widehat{\bold{B}}\Vert_F^2$ and $\lambda_{k}\Vert\bold{C}^{k}-\widehat{\bold{C}}\Vert_F^2$ are the fitting terms for the factors $\bold{A}$, $\bold{B}$ and $\bold{C}$.

In fact, RALS also gives us three least-squares subproblems. For example, the first subproblem in (\ref{rals}) actually is equivalent to solving a least-squares problem: 
\begin{eqnarray}\label{ralslsq}
\begin{bmatrix} 
 (\widetilde{\bold{C}}^{k}\odot\widetilde{\bold{B}}^{k}) \\
 \lambda_{k} \cdot \bold{I}^{R\times R}
\end{bmatrix}\bold{X}=
\begin{bmatrix}
{\bold{T_{(1)}}}^{\text{T}} \\
\lambda_{k} \cdot (\widetilde{\bold{A}}^k)^{\text{T}}
\end{bmatrix},
\end{eqnarray}
which is different from the least-squares obtained from ALS, that is,
\begin{eqnarray}\label{alslsq}
(\bold{C}^{k}\odot\bold{B}^{k})\bold{X}={\bold{T_{(1)}}}^{\text{T}}.
\end{eqnarray}

\begin{framed}
  \textbf{\textsf{RALS-Algorithm}}
  
\textbf{procedure} CP-RALS($\mathcal{X}, R, N, \lambda$)

\hskip10pt give initial guess $\bold{A}^0 \in \mathbb{R}^{I \times R}$, $\bold{B}^0 \in \mathbb{R}^{J \times R}$, $\bold{C}^0\in \mathbb{R}^{K \times R}$, $\lambda_0$

\hskip10pt \textbf{for} $n=1,\dots, N$ \textbf{do}

\hskip25pt $\bold{W} \leftarrow [(\bold{C}^{n}\odot \bold{B}^{n}) ;  \lambda_n \bold{I}^{R\times R}] \in \mathbb{R}^{(JK+R)\times R}$ 
\vskip5pt
\hskip27pt $\bold{S} \leftarrow [\bold{X_{(1)}}^{\text{T}} ;  \lambda_n (\bold{A}^n)^{\text{T}}] \in \mathbb{R}^{(JK+R)\times I}$
\vskip5pt
\hskip26pt $\bold{A}^{n+1} \leftarrow \bold{W}/\bold{S}$ ------ \% solving least squares to update $\bold{A}$
\vskip5pt

\hskip25pt $\bold{W} \leftarrow [(\bold{C}^{n}\odot \bold{A}^{n+1}) ;  \lambda_n\bold{I}^{R\times R}] \in \mathbb{R}^{(IK+R)\times R}$ 
\vskip5pt
\hskip27pt $\bold{S} \leftarrow [\bold{X_{(2)}}^{\text{T}} ;  \lambda_n(\bold{B}^n)^{\text{T}}] \in \mathbb{R}^{(IK+R)\times J}$
\vskip5pt
\hskip26pt $\bold{B}^{n+1} \leftarrow \bold{W}/\bold{S}$ ------ \% solving least squares to update $\bold{B}$
\vskip5pt
\hskip25pt $\bold{W} \leftarrow [(\bold{B}^{n+1}\odot \bold{A}^{n+1}) ;  \lambda_n\bold{I}^{R\times R}] \in \mathbb{R}^{(IJ+R)\times R}$ 
\vskip5pt
\hskip27pt $\bold{S} \leftarrow [\bold{X_{(3)}}^{\text{T}} ;  \lambda_n(\bold{C}^n)^{\text{T}}] \in \mathbb{R}^{(IJ+R)\times K}$
\vskip5pt
\hskip26pt $\bold{C}^{n+1} \leftarrow \bold{W}/\bold{S}$ ------ \% solving least squares to update $\bold{C}$
\vskip5pt
\hskip26pt $\lambda_{n+1} \leftarrow  \delta \cdot \lambda_n$ ------ \% update regularization parameter
\vskip5pt
\hskip10pt \textbf{end for}

\hskip10pt return $\bold{A}^{N}$, $\bold{B}^{N}$, $\bold{C}^{N}$

\textbf{end procedure}
\end{framed}
The number of iteration $N$ is set to a large number; otherwise a convergence stopping criterion can be used.%or is replaced by some convergence criterion. 

Our notion of a regularized ALS can be misleading. In the usual regularization setting, the minimizer (critical point) of the regularized cost functional is found. The additional regularization terms in \ref{rals} penalize the difference between the previous iterates, which themselves need not be a bounded sequence. Although there is regularization in each step, the method imposes no \emph{uniform} constraint for all $k$ on the matrices $\bold{A}^k,\bold{B}^k,\bold{C}^k$.  For this reason, the cost functional \ref{rals} is not a constrained optimization problem, i.e. no global optimal solutions are guaranteed.  In particular, this approach does not address the degeneracy problem. Moreover, the limit points of the RALS algorithm will be shown as the critical 
points of the least-squares functional \ref{alsprob2} and
not of the regularized version.

%The reason is, that the 
%additional regularization terms in \ref{rals} penalized
%the difference to the previous iterates, which themselves need
%not be a bounded sequence. Thus, although there is regularization
%in each step, there method imposes no \emph{uniform} constraint (for all $k$) on the matrices 
%$\bold{A}^k,\bold{B}^k,\bold{C}^k$. In particular, this approach does not
%solve the degeneracy problem. 

The RALS method differs from the Tikhonov regularization for CP decomposition considered in \cite{LimComon} \cite{PP} in the following way:
the Tikhonov functional minimized is
\begin{equation}\label{rfunct} 
\left \Vert\mathcal{T}-\sum_{r=1}^{R}\bold{a_r} \circ \bold{b_r} \circ \bold{c_r} \right \Vert_F^2 + 
 \lambda\left( \Vert \bold{A} \Vert_F^2 + \Vert \bold{B} \Vert_F^2 + \Vert \bold{C} \Vert_F^2 \right).
\end{equation}
If ALS is applied to this regularized functional, then the corresponding subproblems are
\begin{eqnarray}\label{alsr}
\bold{A}^{k+1}&=&\displaystyle\mathop{\mathrm{argmin}}_{\widehat{\bold{A}}\in \mathbb{R}^{I\times R}}\Vert\bold{T_{(1)}}^{I\times JK}-\widehat{\bold{A}}  (\bold{C}^{k}\odot\bold{B}^{k})^\text{T}\Vert_F^2 + \lambda \Vert \widehat{\bold{A}}\Vert_F^2,\notag \\
\bold{B}^{k+1}&=&\displaystyle\mathop{\mathrm{argmin}}_{\widehat{\bold{B}}\in \mathbb{R}^{J\times R}}\Vert\bold{T_{(2)}}^{J\times IK}-\widehat{\bold{B}}(\bold{C}^{k}\odot\bold{A}^{k+1})^\text{T}\Vert_F^2 + \lambda \Vert \widehat{\bold{B}}\Vert_F^2,\\
\bold{C}^{k+1}&=&\displaystyle\mathop{\mathrm{argmin}}_{\widehat{\bold{C}}\in \mathbb{R}^{K\times R}}\Vert\bold{T_{(3)}}^{K\times IJ}-\widehat{\bold{C}}(\bold{B}^{k+1}\odot\bold{A}^{k+1})^\text{T}\Vert_F^2 + \lambda \Vert\widehat{\bold{C}}\Vert_F^2. \notag
\end{eqnarray}
Observe that the penalization terms, $\Vert \widehat{\bold{A}}\Vert_F^2$, $\Vert \widehat{\bold{B}}\Vert_F^2$ and $\Vert   \widehat{\bold{C}}\Vert_F^2$, are independent of $k$, which are viewed as uniform constraints on the norm of the matrices. 
From \cite{LimComon}, this constrained optimization problem \ref{rfunct} always has a globally optimal solution.
However, the price to pay here is that the optimal solution is not a critical point of the 
\ref{alsprob2}, but it is a critical point of the regularized functional.

\subsubsection{Proximal Point Modification of the Gauss-Seidel (PGS) Method}

In Section $3$, we have shown that the ALS (GS) method may not converge due to a requirement of convexity or quasiconvexity assumption to guarantee convergence. Thus, a modification of GS is considered by adding an extra term in each iteration:
$$\bold{x_i}^{k+1}=\displaystyle\mathop{\mathrm{argmin}}_{\bold{y_i} \in X_i} f(\bold{x_1}^{k+1}, \dots, \bold{y_i}, \dots, \bold{x_m}^{k}) + \frac{1}{2} \tau_i \Vert \bold{y_i} - \bold{x_i}^k\Vert^2.$$
This method is called the proximal point modification of the GS  (PGS) method (see \cite{DPB}, \cite{LM}). It is also referred as partial proximal minimization \cite{DPP}. The PGS formulation lead to a weakened assumption for convergence to critical points.

\begin{definition}
The GS and PGS methods are well-defined if every subproblem has solutions. 
\end{definition}

\begin{proposition}[Convergence proposition of PGS \cite{LM}]
Suppose that the PGS method is well defined and that the sequence $\{\bold{x}^k\}$ has limit points, then every limit point $\overline{\bold{x}}$ of $\{\bold{x}^k\}$ is a critical point of problem (\ref{GS}).
\end{proposition}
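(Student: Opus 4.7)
The plan is to exploit the extra proximal term in the PGS update to enforce a summable control on the successive differences $\bold{x}_i^{k+1} - \bold{x}_i^k$, and then pass to the limit in the block-wise optimality inequality to recover the critical-point condition on $\overline{\bold{x}}$. The continuous differentiability of $f$ (assumed in the setup of problem (\ref{GS})) will be used together with Theorem~\ref{opt}.

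First I would establish monotone decrease. By the minimization property that defines $\bold{x}_i^{k+1}$, comparing against the feasible choice $\bold{y}_i = \bold{x}_i^k$ gives
\begin{equation*}
f(\bold{x}_1^{k+1},\dots,\bold{x}_i^{k+1},\bold{x}_{i+1}^k,\dots,\bold{x}_m^k) + \tfrac{\tau_i}{2}\|\bold{x}_i^{k+1}-\bold{x}_i^k\|^2 \;\le\; f(\bold{x}_1^{k+1},\dots,\bold{x}_i^k,\bold{x}_{i+1}^k,\dots,\bold{x}_m^k).
\end{equation*}
Summing this inequality over $i=1,\dots,m$ telescopes the right-hand side into $f(\bold{x}^k)$ on one side and $f(\bold{x}^{k+1})$ plus the block-wise proximal residuals on the other:
\begin{equation*}
f(\bold{x}^{k+1}) + \sum_{i=1}^m \tfrac{\tau_i}{2}\|\bold{x}_i^{k+1}-\bold{x}_i^k\|^2 \;\le\; f(\bold{x}^k).
\end{equation*}
Hence $\{f(\bold{x}^k)\}$ is non-increasing. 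If $\overline{\bold{x}}$ is a limit point of a subsequence $\{\bold{x}^k\}_{k\in K}$, then by continuity $f(\bold{x}^k)\to f(\overline{\bold{x}})$ along $K$, and since the whole sequence is monotone, $f(\bold{x}^k)\to f(\overline{\bold{x}})$ for the full sequence. Summing the displayed inequality from $k=0$ to $\infty$ shows $\sum_k \|\bold{x}_i^{k+1}-\bold{x}_i^k\|^2 < \infty$ for each $i$, so in particular $\|\bold{x}_i^{k+1}-\bold{x}_i^k\|\to 0$.

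Next I would pass to the limit in the variational inequality for each block. Because $\bold{x}^k\to\overline{\bold{x}}$ along $K$ and the successive differences tend to zero, the intermediate iterates $(\bold{x}_1^{k+1},\dots,\bold{x}_{i-1}^{k+1},\bold{x}_i^{k+1},\bold{x}_{i+1}^k,\dots,\bold{x}_m^k)$ all converge to $\overline{\bold{x}}$ along $K$. Fix any $\bold{y}_i \in X_i$; by the defining optimality of $\bold{x}_i^{k+1}$,
\begin{equation*}
f(\bold{x}_1^{k+1},\dots,\bold{x}_i^{k+1},\dots,\bold{x}_m^k) + \tfrac{\tau_i}{2}\|\bold{x}_i^{k+1}-\bold{x}_i^k\|^2 \;\le\; f(\bold{x}_1^{k+1},\dots,\bold{y}_i,\dots,\bold{x}_m^k) + \tfrac{\tau_i}{2}\|\bold{y}_i-\bold{x}_i^k\|^2.
\end{equation*}
Taking $k\to\infty$ along $K$ and using continuity of $f$ yields
\begin{equation*}
f(\overline{\bold{x}}) \;\le\; f(\overline{\bold{x}}_1,\dots,\bold{y}_i,\dots,\overline{\bold{x}}_m) + \tfrac{\tau_i}{2}\|\bold{y}_i-\overline{\bold{x}}_i\|^2.
\end{equation*}
Thus $\overline{\bold{x}}_i$ minimizes the block-$i$ proximal functional $g_i(\bold{y}_i) := f(\overline{\bold{x}}_1,\dots,\bold{y}_i,\dots,\overline{\bold{x}}_m) + \tfrac{\tau_i}{2}\|\bold{y}_i-\overline{\bold{x}}_i\|^2$ over $X_i$. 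By Theorem~\ref{opt}(a), $\nabla g_i(\overline{\bold{x}}_i)^{\text{T}}(\bold{y}_i-\overline{\bold{x}}_i)\ge 0$ for all $\bold{y}_i\in X_i$, and since $\nabla g_i(\overline{\bold{x}}_i) = \nabla_i f(\overline{\bold{x}})$ (the proximal term contributes $\tau_i(\overline{\bold{x}}_i-\overline{\bold{x}}_i)=0$), we obtain the partial critical condition $\nabla_i f(\overline{\bold{x}})^{\text{T}}(\bold{y}_i-\overline{\bold{x}}_i)\ge 0$ for every $i$. Summing over $i$ against an arbitrary $\bold{y}=(\bold{y}_1,\dots,\bold{y}_m)\in X$ gives $\nabla f(\overline{\bold{x}})^{\text{T}}(\bold{y}-\overline{\bold{x}})\ge 0$, i.e.\ $\overline{\bold{x}}$ is a critical point of (\ref{GS}).

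The main obstacle I anticipate is the bookkeeping in the second step: verifying that the intermediate partially-updated iterates converge to the same limit $\overline{\bold{x}}$. This is exactly where the proximal term pays off compared to the plain GS proof: without it, one can only assert subsequential convergence of individual blocks and has to invoke strict quasiconvexity to propagate critical-point status across blocks, whereas here the telescoping identity forces $\|\bold{x}_i^{k+1}-\bold{x}_i^k\|\to 0$ automatically and removes the need for any (quasi)convexity hypothesis on $f$. Everything else is a standard limit argument using the continuity of $f$ and $\nabla f$.
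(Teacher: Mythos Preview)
Your proof is correct and follows essentially the same strategy as the paper. The paper does not prove this proposition directly (it is cited from \cite{LM}), but its proof of Theorem~\ref{conv} is the RALS-specific adaptation: it derives the same decrease inequality with the proximal residual, deduces $\|\bold{x}_i^{n_k+1}-\bold{x}_i^{n_k}\|\to 0$ (via a squeeze argument along the subsequence rather than your full-sequence summability), concludes that the intermediate partially-updated iterates share the limit $\overline{\bold{x}}$, and then passes to the limit in the block optimality conditions. The only cosmetic difference is that the paper passes to the limit in the gradient equations $\nabla_i f(\cdot)+\tau_i(\bold{x}_i^{k+1}-\bold{x}_i^k)=0$ directly, whereas you pass to the limit in the minimization inequality and invoke Theorem~\ref{opt} afterwards; both are equivalent here and your version has the virtue of handling the constrained case $X_i\subsetneq\mathbb{R}^{n_i}$ uniformly.
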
 

Recall that in Section \ref{bngsm}, we showed that the ALS method is the GS method for CP decomposition with respect to the factor matrices $\bold{A}$, $\bold{B}$ and $\bold{C}$. Similarly, through vectorization of the three factor matrices, we have
\begin{eqnarray*}
&\bold{x_{1}}^{k+1}&=\displaystyle\mathop{\mathrm{argmin}}_{\bold{y_1} \in \mathbb{R}^{n_1}}\{f(\bold{y_1}, \bold{x_{2}}^{k}, \bold{x_{3}}^{k})+\lambda_{k}\Vert\bold{x_{1}}^{k}-\bold{y_1}\Vert_F^2 \},\\
&\bold{x_{2}}^{k+1}&=\displaystyle\mathop{\mathrm{argmin}}_{\bold{y_2} \in \mathbb{R}^{n_2}}\{f(\bold{x_{1}}^{k+1}, \bold{y_2}, \bold{x_{3}}^{k})+\lambda_{k}\Vert\bold{x_{2}}^{k}-\bold{y_2}\Vert_F^2\},\\
&\bold{x_{3}}^{k+1}&=\displaystyle\mathop{\mathrm{argmin}}_{\bold{y_3} \in \mathbb{R}^{n_3}}\{f(\bold{x_{1}}^{k+1}, \bold{x_{2}}^{k+1}, \bold{y_3})+\lambda_{k}\Vert\bold{x_{3}}^{k}-\bold{y_3}\Vert_F^2\}.
\end{eqnarray*}
Thus the regularized ALS is the PGS method for CP decomposition.
%The PGS formulation lead to weakened assumption for convergence to critical points.

\subsection{Convergence Result of the Regularized ALS}\label{sec: main}

In this section, we will show that the converging sequence obtained from RALS method leads to a critical point. This characterization is not true for the ALS algorithm as we have seen in Section \ref{alswithgs} where converging sequence of factor matrices $\{(\bold{A}^k, \bold{B}^k, \bold{C}^k)\}$  cannot guarantee that the limit point is a critical point (local minimum). 

We adapt the proposition in Section 7 in \cite{LM} to our problem.

\begin{thm}\label{conv}
Suppose that the sequence $\{(\bold{A}^k, \bold{B}^k, \bold{C}^k)\}$ obtained from RALS has limit points, then every limit point $(\overline{\bold{A}}, \overline{\bold{B}}, \overline{\bold{C}})$ is a critical point of the Problem (\ref{alsprob2}).
\end{thm}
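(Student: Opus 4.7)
The plan is to exploit the correspondence, already set up in Section 4.1, between RALS and the PGS iteration applied to the CP cost $f(\mathbf{A},\mathbf{B},\mathbf{C}) = \|\mathcal{T} - \sum_{r=1}^R \mathbf{a}_r \circ \mathbf{b}_r \circ \mathbf{c}_r\|_F^2$ on $X = \mathbb{R}^{n_1}\times\mathbb{R}^{n_2}\times\mathbb{R}^{n_3}$, and then verify the hypotheses of the PGS convergence proposition for this specific $f$. Since $X$ is the whole space and $f$ is smooth, the critical-point condition collapses by Theorem \ref{opt} to $\nabla f(\overline{\mathbf{x}}) = \mathbf{0}$, so the goal is to pass to this limit along a convergent subsequence.

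First I would check well-definedness: each subproblem in (\ref{rals}) is a least-squares problem in a single block $\widehat{\mathbf{A}}$, $\widehat{\mathbf{B}}$, or $\widehat{\mathbf{C}}$, augmented by the strictly convex quadratic $\lambda_k \|\cdot - \mathbf{x}_i^k\|_F^2$, and is therefore coercive with a unique minimizer (the explicit linear system being (\ref{ralslsq})). Next I would derive monotone decrease by testing the minimizer against the comparison $\widehat{\mathbf{A}} = \mathbf{A}^k$ (and analogously for the other two blocks), producing the one-step inequality
\begin{equation*}
f(\ldots, \mathbf{x}_i^{k+1}, \ldots) + \lambda_k \|\mathbf{x}_i^{k+1} - \mathbf{x}_i^k\|_F^2 \leq f(\ldots, \mathbf{x}_i^k, \ldots).
\end{equation*}
Summing across the three block updates telescopes to $f(\mathbf{x}^{k+1}) + \lambda_k \|\mathbf{x}^{k+1} - \mathbf{x}^k\|^2 \leq f(\mathbf{x}^k)$, and since $f \geq 0$ this yields $\sum_k \lambda_k \|\mathbf{x}^{k+1} - \mathbf{x}^k\|^2 < \infty$. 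Under the natural standing assumption that $\lambda_k$ stays bounded away from $0$ and above, this forces $\|\mathbf{x}^{k+1} - \mathbf{x}^k\| \to 0$.

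To pass to the limit, I would fix a limit point $(\overline{\mathbf{A}},\overline{\mathbf{B}},\overline{\mathbf{C}})$ and a subsequence with $\mathbf{x}^{k_j} \to \overline{\mathbf{x}}$. The estimate $\|\mathbf{x}^{k+1}-\mathbf{x}^k\| \to 0$ gives also $\mathbf{x}^{k_j+1} \to \overline{\mathbf{x}}$ and likewise for the intermediate states in which only one or two blocks have been updated. The stationarity condition for the $\mathbf{A}$-subproblem reads
\begin{equation*}
\nabla_{\mathbf{A}} f(\mathbf{A}^{k_j+1}, \mathbf{B}^{k_j}, \mathbf{C}^{k_j}) + 2\lambda_{k_j}(\mathbf{A}^{k_j+1} - \mathbf{A}^{k_j}) = \mathbf{0},
\end{equation*}
and analogously for $\mathbf{B}$ and $\mathbf{C}$. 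Letting $j\to\infty$, continuity of $\nabla f$, boundedness of $\lambda_{k_j}$, and $\mathbf{A}^{k_j+1}-\mathbf{A}^{k_j}\to\mathbf{0}$ drive the proximal remainder to zero and give $\nabla_{\mathbf{A}} f(\overline{\mathbf{x}}) = \mathbf{0}$; concatenating the three block gradients yields $\nabla f(\overline{\mathbf{x}}) = \mathbf{0}$, which is the desired critical-point condition for (\ref{alsprob2}).

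The main obstacle, and precisely what distinguishes RALS from the Tikhonov variant (\ref{alsr}), is making the proximal residual $\lambda_{k_j}(\mathbf{x}^{k_j+1} - \mathbf{x}^{k_j})$ vanish so that the limit satisfies the stationarity condition for the \emph{original} least-squares functional rather than for a penalized one; boundedness of the regularization sequence combined with the summability estimate above is what buys this. The conceptual payoff over ALS is that the proximal term makes every subproblem strictly convex, which replaces the strict-quasiconvexity hypothesis of Theorem \ref{quasiconvexthm} --- exactly the hypothesis shown to fail for $f$ in the rank-deficient Khatri-Rao regime identified in Section \ref{analysis-als} as the source of swamps.
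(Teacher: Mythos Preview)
Your proposal is correct and follows essentially the same route as the paper: derive the sufficient-decrease inequality from the proximal subproblems, conclude that successive iterates (and hence the intermediate block-updated states) coalesce along a convergent subsequence, and then pass to the limit in the blockwise first-order optimality conditions using continuity of $\nabla f$. The only minor difference is that you obtain $\|\mathbf{x}^{k+1}-\mathbf{x}^k\|\to 0$ via a telescoping summability argument over the full sequence (making the lower bound on $\lambda_k$ explicit), whereas the paper extracts the same conclusion by a squeeze on the function values along the subsequence; both are standard and interchangeable here.
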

\begin{proof}
Recall the vectorization in Section \ref{alswithgs} which allows us to re-express $\{(\bold{A}^k, \bold{B}^k, \bold{C}^k)\}$ as $(\bold{x_1}, \bold{x_2},\bold{x_3})$ and the cost function as
\[f(\bold{x_1}, \bold{x_2}, \bold{x_3})=\displaystyle\sum_{k=1}^{K}\sum_{j=1}^{J}\sum_{i=1}^{I}(t_{ijk}-\sum_{r=1}^{R}a_{ir}b_{jr}c_{kr})^2\]
where $\bold{x_1}=vec(\bold{A}) \in \mathbb{R}^{IR}$, $\bold{x_2}=vec(\bold{B}) \in \mathbb{R}^{JR}$ and $\bold{x_3}=vec(\bold{C}) \in \mathbb{R}^{KR}$. Let $\{\bold{x}^{n_k}\}_{k=1}^{\infty}=\{(\bold{x_1}^{n_k}, \bold{x_2}^{n_k}, \bold{x_3}^{n_k})\}_{k=1}^{\infty}$ be the converging subsequence of $\{(\bold{x_1}^k, \bold{x_2}^k, \bold{x_3}^k)\}$ which has the limit point $(\bold{\overline{x}_1}, \bold{\overline{x}_2}, \bold{\overline{x}_3})$.

The subproblem in the RALS method provides the following inequality:
\begin{eqnarray}\label{limit}
f(\bold{x_1}^{n_{k}+1}, \bold{x_2}^{n_k}, \bold{x_3}^{n_k}) \leq f(\bold{x_1}^{n_{k}}, \bold{x_2}^{n_k}, \bold{x_3}^{n_k}) - \lambda_{n_k}\Vert\bold{x_1}^{n_k+1}-\bold{x_1}^{n_k}\Vert^2.
\end{eqnarray}
Using the inequality above repeatedly, we have 
\begin{eqnarray}\label{limitit}
f(\bold{x_1}^{n_k+1}, \bold{x_2}^{n_k+1}, \bold{x_3}^{n_k+1}) 
&\leq& f(\bold{x_1}^{n_k+1}, \bold{x_2}^{n_k+1}, \bold{x_3}^{n_k}) \notag \\ 
&\leq& f(\bold{x_1}^{n_k+1}, \bold{x_2}^{n_k}, \bold{x_3}^{n_k})  \\
&\leq& f(\bold{x_1}^{n_k}, \bold{x_2}^{n_k}, \bold{x_3}^{n_k}). \notag
\end{eqnarray}
By the Squeeze Theorem, the continuity of $f$ and $(\bold{x_1}^{n_k}, \bold{x_2}^{n_k}, \bold{x_3}^{n_k}) \longrightarrow (\bold{\overline{x}_1}, \bold{\overline{x}_2}, \bold{\overline{x}_3})$ as $k \rightarrow \infty$, then we have the following
$$\displaystyle\lim_{k\rightarrow \infty}f(\bold{x_1}^{n_k+1}, \bold{x_2}^{n_k}, \bold{x_3}^{n_k})=\displaystyle\lim_{k \rightarrow \infty}f(\bold{x_1}^{n_k}, \bold{x_2}^{n_k}, \bold{x_3}^{n_k})= f(\bold{\overline{x}_1}, \bold{\overline{x}_2}, \bold{\overline{x}_3}).$$
Now taking the limits in (\ref{limit}) for $k \rightarrow \infty$ on both sides, we have
\begin{eqnarray}\label{proof1}
\displaystyle\lim_{k \rightarrow \infty} \Vert\bold{x_1}^{n_k+1}-\bold{x_1}^{n_k}\Vert^2 =0
\end{eqnarray}
which implies
\begin{eqnarray}\label{proof2}
\displaystyle\lim_{k \rightarrow \infty} (\bold{x_1}^{n_k+1}, \bold{x_2}^{n_k}, \bold{x_3}^{n_k})=(\bold{\overline{x}_1}, \bold{\overline{x}_2}, \bold{\overline{x}_3}).
\end{eqnarray}
Similarly, we obtain
\begin{eqnarray}\label{proof3}
\displaystyle\lim_{k \rightarrow \infty} (\bold{x_1}^{n_k+1}, \bold{x_2}^{n_k+1}, \bold{x_3}^{n_k})=(\bold{\overline{x}_1}, \bold{\overline{x}_2}, \bold{\overline{x}_3}). 
\end{eqnarray}

Since every RALS subproblem is well defined, then each point in the subsequence satisfies the corresponding optimality condition (Theorem \ref{opt}); i.e.
\begin{eqnarray}
&&\triangledown_{1}f(\bold{x_1}^{n_k+1}, \bold{x_2}^{n_k}, \bold{x_3}^{n_k}) + 2\lambda_{n_k}(\bold{x_1}^{n_k+1}-\bold{x_1}^{n_k}) = 0, \label{oc1}\\
&&\triangledown_{2}f(\bold{x_1}^{n_k+1}, \bold{x_2}^{n_k+1}, \bold{x_3}^{n_k}) + 2\lambda_{n_k}(\bold{x_2}^{n_k+1}-\bold{x_2}^{n_k}) = 0, \label{oc2}\\
&&\triangledown_{3}f(\bold{x_1}^{n_k+1}, \bold{x_2}^{n_k+1}, \bold{x_3}^{n_k+1}) + 2\lambda_{n_k}(\bold{x_3}^{n_k+1}-\bold{x_3}^{n_k}) = 0. \label{oc3}
\end{eqnarray}
Then, taking $k \rightarrow \infty$ in (\ref{oc1}--\ref{oc3}), using the arguments in (\ref{proof1}), (\ref{proof2}), (\ref{proof3}) and the continuity of $\triangledown f$, we obtain
$$\triangledown_{i}f(\bold{\overline{x}_1}, \bold{\overline{x}_2}, \bold{\overline{x}_3})=0,\quad i=1,2,3.$$

Thus, this proves that the limit point $(\bold{\overline{x}_1}, \bold{\overline{x}_2}, \bold{\overline{x}_3})$ is a critical point of the cost function $f(\bold{x_1}, \bold{x_2}, \bold{x_3})$. Furthermore, we obtain
\begin{eqnarray}\label{ocnormal}
&&\triangledown_{\bold{A}}f(\bold{\overline{A}}, \bold{\overline{B}}, \bold{\overline{C}})=0, \\
&&\triangledown_{\bold{B}}f(\bold{\overline{A}}, \bold{\overline{B}}, \bold{\overline{C}})=0, \nonumber\\
&&\triangledown_{\bold{C}}f(\bold{\overline{A}}, \bold{\overline{B}}, \bold{\overline{C}})=0. \nonumber
\end{eqnarray}
through the inverse mapping of the vectorization. Therefore, $(\bold{\overline{A}}, \bold{\overline{B}}, \bold{\overline{C}})$ is a critical point.
\end{proof}

%\subsection{Remark}
Here are some remarks:
\begin{enumerate}
\item
Following from the discussion and the theorem above, we showed that RALS method solves the same ALS cost function.  Moreover, we have proved that the limit point obtained from RALS is a critical point of the original minimization problem of $\Vert\mathcal{T}-\widehat{\mathcal{T}}\Vert_F^2 $.
\item
The main theorem above solves the CP decomposition on the whole space, so we use the optimality condition, $\triangledown f (\bold{\overline{x}_1}, \bold{\overline{x}_2}, \bold{\overline{x}_3})=0$. If the solution is not in the whole space, namely, in the problem of  non-negative tensor decomposition, then the optimality condition,$\triangledown f (\bold{\overline{x}_1}, \bold{\overline{x}_2}, \bold{\overline{x}_3})^{\text{T}}(\bold{y}-\bold{\overline{x}_i}) \geq 0$, must be used. %We can still get the same conclusion.
\item
For the ALS method, under the same assumption in Theorem \ref{conv}, the theorem may not be true. From the assumption, we know that the sequence $\{(\bold{A}^{n_k}, \bold{B}^{n_k}, \bold{C}^{n_k})\}$ converges to a limit point $(\overline{\bold{A}}, \overline{\bold{B}}, \overline{\bold{C}})$, but we cannot obtain the sequences $\{(\bold{A}^{n_k+1}, \bold{B}^{n_k}, \bold{C}^{n_k})\}$ and $\{(\bold{A}^{n_k+1}, \bold{B}^{n_k+1}, \bold{C}^{n_k})\}$ to converge. Furthermore, we also cannot prove that these two sequences converge to the same limit point $(\overline{\bold{A}}, \overline{\bold{B}}, \overline{\bold{C}})$.  
\item
The optimality conditions in (\ref{ocnormal}) are equivalent to the normal equations of the subproblems:
%Since each subproblem in the ALS algorithm (\ref{als}) is a least squares problem, the solutions of the subproblem satisfy the normal equations:  
%\begin{eqnarray*}
%& &\bold{T_{(1)}}^{I\times JK}(\bold{C}^k \odot \bold{B}^k)= \bold{A}^{k+1}  ( \bold{C}^{k}\odot \bold{B}^k)^{\text{T}}( \bold{C}^{k}\odot \bold{B}^{k}),\\
%& &\bold{T_{(2)}}^{J\times IK}(\bold{C}^{k}\odot \bold{A}^{k+1})= \bold{B}^{k+1}  ( \bold{C}^{k}\odot \bold{A}^{k+1})^{\text{T}}( \bold{C}^{k}\odot \bold{A}^{k+1}),\\
%& &\bold{T_{(3)}}^{K\times IJ}(\bold{B}^{k+1}\odot \bold{A}^{k+1})= \bold{C}^{k+1}  ( \bold{B}^{k+1}\odot \bold{A}^{k+1})^{\text{T}}( \bold{B}^{k+1}\odot \bold{A}^{k+1}).
%\end{eqnarray*}
%So, under the same assumption in Theorem \ref{conv}, the limit point is $(\bold{\overline{A}}, \bold{\overline{B}}, \bold{\overline{C}})$ is the stationary point, the necessary and satisfying condition is that it need satisfy the normal equations: 
\begin{eqnarray*}
& &\bold{T_{(1)}}^{I\times JK}(\bold{\overline{C}}\odot \bold{\overline{B}})= \bold{\overline{A}}  ( \bold{\overline{C}}\odot \bold{\overline{B}})^{\text{T}}( \bold{\overline{C}}\odot \bold{\overline{B}}),\\
& &\bold{T_{(2)}}^{J\times IK}(\bold{\overline{C}}\odot \bold{\overline{A}})= \bold{\overline{B}}  ( \bold{\overline{C}}\odot \bold{\overline{A}})^{\text{T}}( \bold{\overline{C}}\odot \bold{\overline{A}}),\\
& &\bold{T_{(3)}}^{K\times IJ}(\bold{\overline{B}}\odot \bold{\overline{A}})= \bold{\overline{C}}  ( \bold{\overline{B}}\odot \bold{\overline{A}})^{\text{T}}( \bold{\overline{B}}\odot \bold{\overline{A}}).
\end{eqnarray*}

%\item 
%Using the results of \cite{LM} a similar the convergence analysis for the ALS method 
%for the regularized functional \ref{rfunct} can be established: In fact, if $\lambda >0$ then \ref{rfunct}
%will be componentwise strictly quasiconvex, thus Theorem~\ref{quasiconvexthm} applies, and limit points
%of \ref{alsr} will be critical points of the regularized functional \ref{rfunct}.

\item 
Theorem \ref{conv} is a \emph{conditional} convergence proof, impinging upon the existence of the ALS limit points.
Thus this result does not address the degeneracy problems. Analysis of the existence of the limits of the (R)ALS is a challenging problem that would require a careful study of the degenerate cases of the CP decomposition.
The regularization \ref{rfunct} considered by Paatero \cite{PP} is a good approach in finding approximation to the degenerate case, but the solutions satisfy the \emph{regularized} cost functional and not the original least-squares functional.
Moreover, a similar conditional convergent analysis \cite{LM} can be established for the regularized functional \ref{rfunct}.  In fact, if $\lambda >0$, then the cost functional \ref{rfunct}
will be component-wise strictly quasiconvex. Thus Theorem~\ref{quasiconvexthm} applies and hence, the limit points
of \ref{alsr} will be critical points of the regularized functional \ref{rfunct}.

%
%The analysis, when a limit point of the (R)ALS exists is more challenging,
%as this requires a careful study of the degenerate cases of the CP decomposition. In the degenerate case,
%the regularization as in \ref{rfunct}  might be a better approach, however, one has to bear in mind that
%the functional is changed in this case.
%As we have already pointed out, we avoided the degeneracy problem by giving a convergence proof
%conditional on the existence of a limit point. 

\end{enumerate}

\section{Numerical Comparison of the ALS and RALS Algorithms}

In this section, we compare the performance of RALS against ALS. We give three examples of third-order tensor CP decomposition to demonstrate the swamp shortening property of the iterated regularization and one example of large real third-order tensor data. 

\subsection{Example I: Initial Factors Dependent Swamp}
Let the matrices
\begin{eqnarray*}
\bold{A}=\begin{bmatrix}
1 & 2 \\
2 & 1 \\
3 & 2
\end{bmatrix}, \quad 
\bold{B}=\begin{bmatrix}
2 & 1 \\
-1 & 3 \\
1 & -1 
\end{bmatrix}, \quad
\bold{C}=\begin{bmatrix}
3 & 1\\
1 & 2 \\
2 & 2
\end{bmatrix},
\end{eqnarray*}
be the three factor matrices of a third-order tensor $\mathcal{T} \in \mathbb{R}^{3 \times 3 \times 3}$ of rank-two:
$$\mathcal{T}=\bold{a_1}\circ \bold{b_1} \circ \bold{c_1} + \bold{a_2} \circ \bold{b_2} \circ \bold{c_2}.$$

%For this tensor, we give two sets of different initial guess -- $\{\bold{A}^0, \bold{B}^0, \bold{C}^0\}$ to compute its CP decomposition by using the ALS algorithm. Since the exact factor matrices of tensor $\mathcal{T}$ have already been known, we multiply some permutation matrices to the factor matrices as an initial guess set. The second set of initial guess is generated randomly. So, we obtain the following two error plots:

\begin{figure}[h]
\centering
\subfloat[right initial guess]{\label{fig: permutation}\includegraphics[height=4cm]{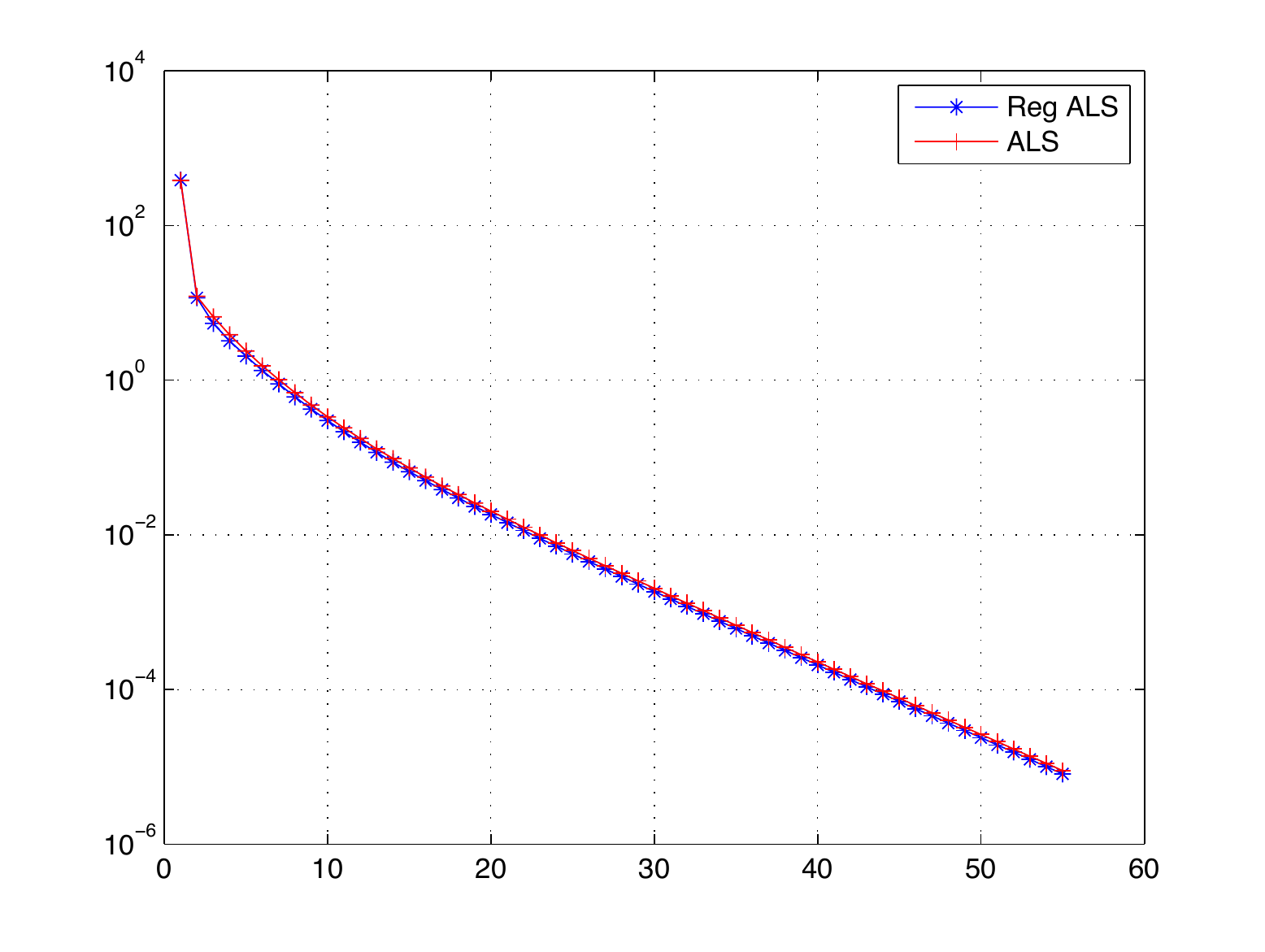}}
\qquad\quad
\subfloat[random initial guess]{\label{fig: bad}\includegraphics[height=4cm]{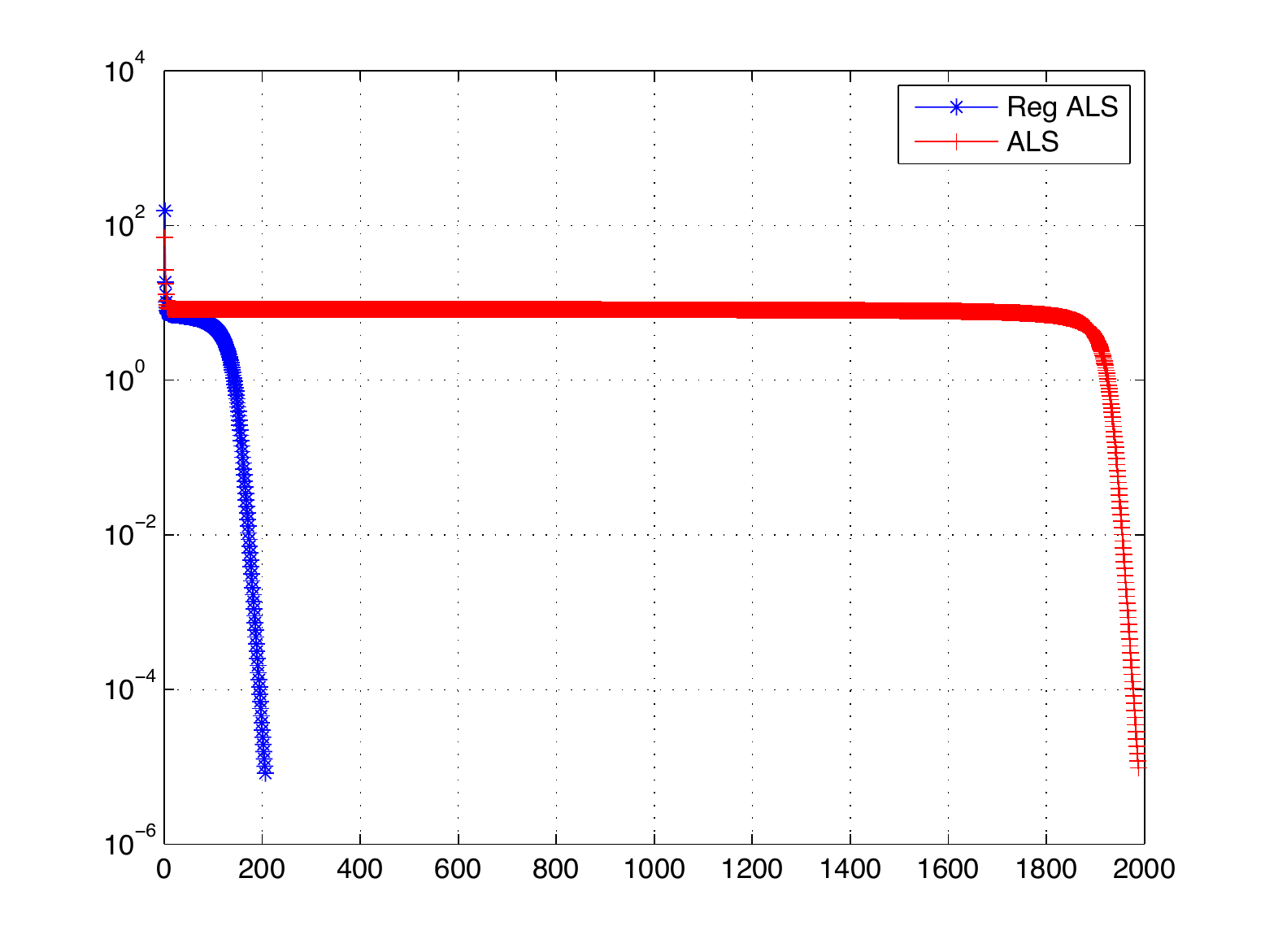}}
\caption{}
\label{fig:example1}
\end{figure}

%\begin{center}
%\footnotesize{Figure 5}
%\hskip5cm
%\footnotesize{Figure 6}
%\end{center}

In the two figures, the plots show the error $\Vert \mathcal{T}-\mathcal{T}_{est}\Vert_F^2$ versus the number of iterations it takes to obtain an error of $1\times 10^{-5}$, where $\mathcal{T}_{est}$ denotes the obtained tensor after every iteration. The red line denotes ALS method while the blue one is RALS in each picture for the same initial guess. 

Two initial guesses are compared in Figures \ref{fig: permutation}--\ref{fig: bad} in terms of ALS. In Figure \ref{fig: permutation}, $\bold{A}^0=\bold{A}$, $\bold{B}^0=\bold{B}\begin{bmatrix}0 & 1\\ 1 & 0\end{bmatrix}$ and $\bold{C}^0=\bold{C}$. %Obviously, we just switch the two columns of the factor matrix $B$ and keep $\bold{A}$ and $\bold{C}$ as the initial guess. 
For Figure \ref{fig: bad}, we randomly generated $3\times 2$ matrices as the initial factors. With $\{\bold{A}, \bold{B}\begin{bmatrix}0 & 1\\ 1 & 0\end{bmatrix}, \bold{C}\}$ as the initial guess, ALS takes $55$ iterations to reach an error within $10^{-5}$ while it takes 1988 iterations by using random initial guess.
%From another view point, let us focus on the difference between ALS and RALS.
Observe in Figure \ref{fig: permutation} that ALS and RALS have the same convergence speed and take the same iterations to reach an error within $10^{-5}$. However, in Figure \ref{fig: bad}, RALS can reduce the swamp by only taking $206$ iterations in comparison to that of  $1988$ ALS iterations. % while RALS just takes $199$ iterations by using the same initial guess. So, at this point, we can see . 
%\textcolor{blue}{In order to analyze this example, we define $\mathscr{V}_1=\{\bold{v}: \exists t_1,t_2 \in \mathbb{R}, \bold{v}=t_1\bold{a_1} +t_2\bold{a_2}\}$, $\mathscr{V}_2=\{\bold{v}: \exists p_1,p_2 \in \mathbb{R}, \bold{v}=p_1\bold{b_1} +p_2\bold{b_2}\}$ and $\mathscr{V}_3=\{\bold{v}: \exists q_1,q_2 \in \mathbb{R}, \bold{v}=q_1\bold{c_1} +q_2\bold{c_2}\}$. Therefore, $\mathscr{V}_1$, $\mathscr{V}_2$ and $\mathscr{V}_3$ are vector spaces which are generated by the factor matrices $\bold{A}$, $\bold{B}$ and $\bold{C}$. Then, each initial matrix in Figure 1 generates the same vector space in the same way with the corresponding $\mathscr{V}_i$, $i=1, 2,3$, while the vector spaces generated by the initial matrices in Figure 2 are different from $\mathscr{V}_i$.}
%Intuitively, for a good initial guess, 
In some cases, randomly generated factors can lead to swamp in the implementation of the ALS. However this swamp phenomena induced by the initial factors is not observed if the RALS method is used.% does not have much differences with ALS in terms of the numerical computation. However, with a bad initial guess, swamp happens a lot in ALS algorithm while RALS does better which means it can remove the swamp in some degree.
%Currently, we are not very sure about how to describe an initial guess that is `good' or `bad'. But we will give an example to show some relationship between the convergence speed and singularity of coefficient matrix in some subproblem.

\subsection{Example II: Rank Specific Swamp}
Let the matrices 
\begin{eqnarray*}
\bold{A}=\begin{bmatrix}
1 & 2 &3 \\
2 & 1 &2
\end{bmatrix}, \quad 
\bold{B}=\begin{bmatrix}
2 & 1 & 1\\
-1 & 3 &1
\end{bmatrix}, \quad
\bold{C}=\begin{bmatrix}
3 & 1 &2\\
1 & 2 &-1
\end{bmatrix}.
\end{eqnarray*}
be the three factor matrices of a third-order tensor $\mathcal{T}=\displaystyle\sum_{r=1}^{3} \bold{a_r}\circ \bold{b_r}\circ \bold{c_r}\in \mathbb{R}^{2 \times 2 \times 2}$ that is a rank-three tensor. Rank-two (Figure \ref{fig: rank-two}) and rank-three (Figure \ref{fig: rank-three}) approximations are calculated with the following initial matrices:
\begin{eqnarray*}
\bold{A}^0=\begin{bmatrix}
0.1679 &   0.7127 \\
 0.9787  &  0.5005
\end{bmatrix}, \quad 
\bold{B}^0=\begin{bmatrix}
0.4711  &  0.6820 \\
0.0596   & 0.0424
\end{bmatrix}, \quad
\bold{C}^0=\begin{bmatrix}
  0.0714  &  0.0967 \\
  0.5216  &   0.8181
\end{bmatrix}.
\end{eqnarray*}
So, the following picture shows the error plot by using ALS and RALS separately:

\begin{figure}[h]
\centering
\subfloat[Rank-three]{\label{fig: rank-three}\includegraphics[height=4cm]{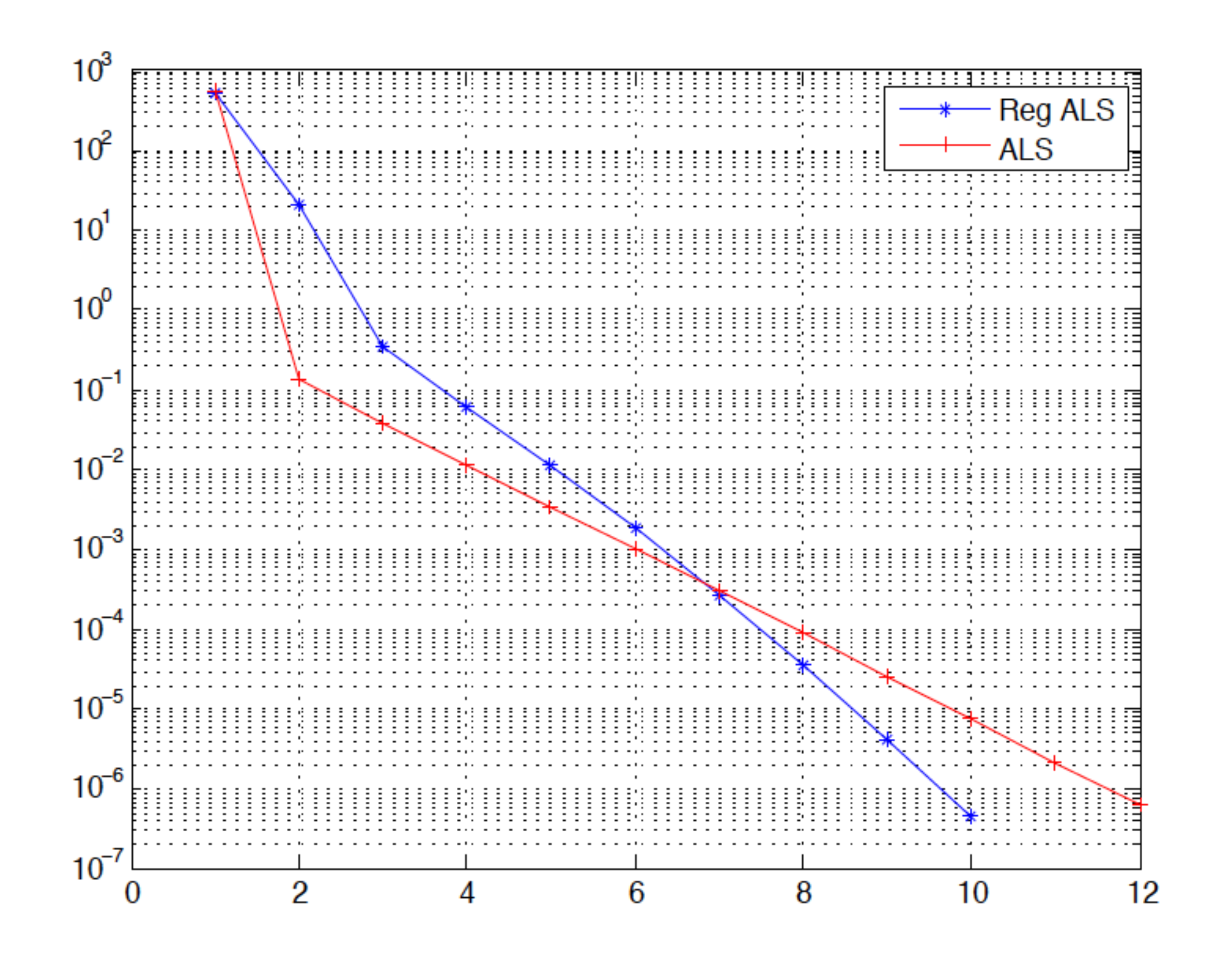}}
\qquad\quad
\subfloat[Rank-two]{\label{fig: rank-two}\includegraphics[height=4cm]{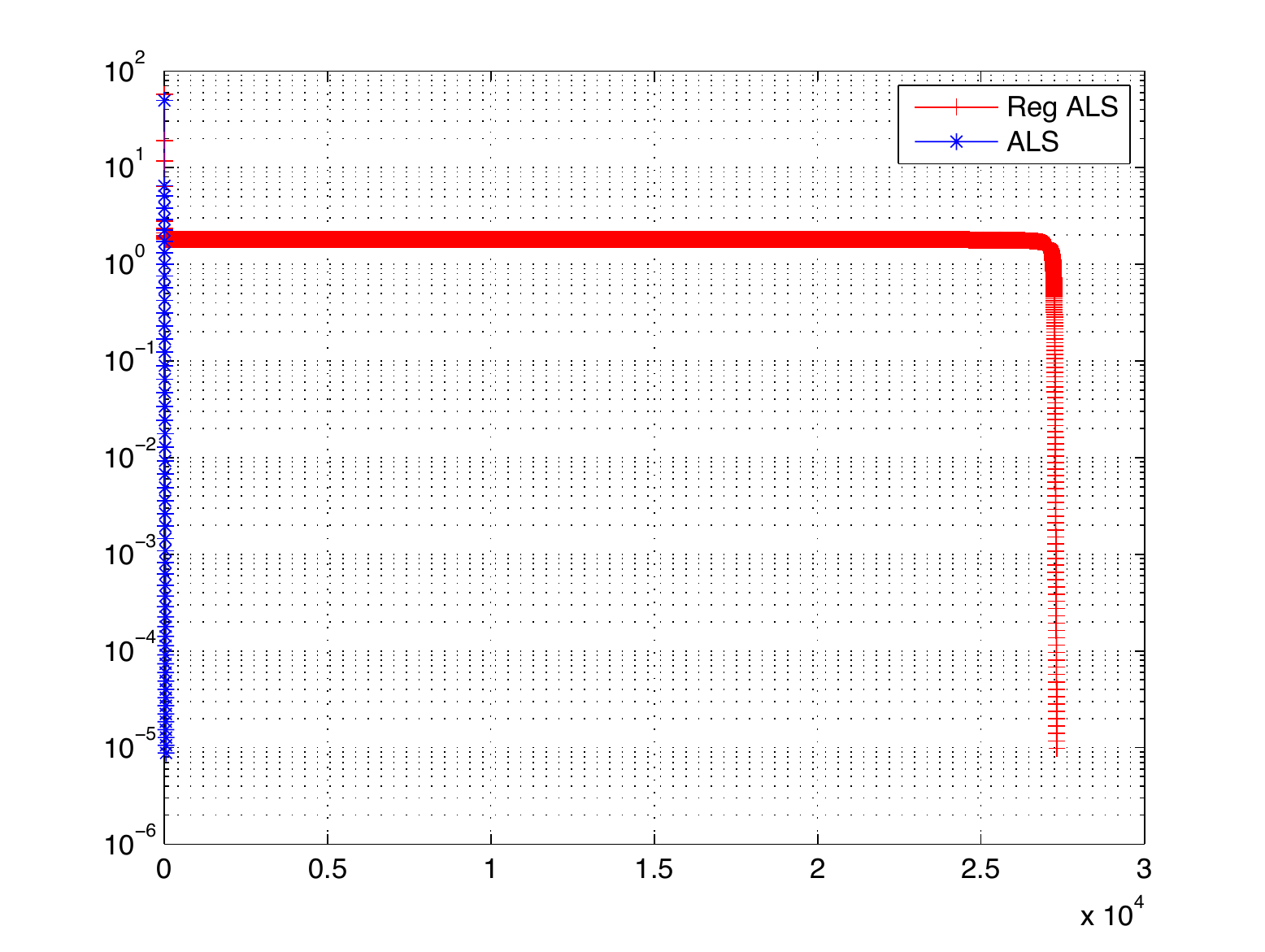}}
\caption{}
\label{fig: example2}
\end{figure}

%\begin{center}
%\footnotesize{Figure 7: Rank-three}
%\hskip4cm
%\footnotesize{Figure 8: Rank-two}
%\end{center}
Notice that the rank-three tensor approximations present no problem in both ALS and RALS as seen in Figure \ref{fig: rank-three}. However, in Figure \ref{fig: rank-two}, the rank-two tensor approximation requires only $53$ iterations RALS (blue line) to reach an error within $10^{-5}$ while ALS needs $27322$ iterations as indicated in a swamp. Further investigation is needed to understand the degeneracy problems with respect to the RALS algorithm.

\subsection{Example III: Induced Rank-Deficiency Swamp}
From the example in Section \ref{analysis-als}, the RALS and ALS are compared. Recall that the rank deficiency of the Khatri-Rao products induce an ALS swamp. In the Figure \ref{fig: fifth-swamp}, the error plots  show a swamp for ALS with $9707$ iterations while RALS exhibits no swamp with only $884$ iterations. 
\begin{figure}[h]
\centering
\subfloat[Fifth order tensor]{\label{fig: fifth-swamp}\includegraphics[height=4cm]{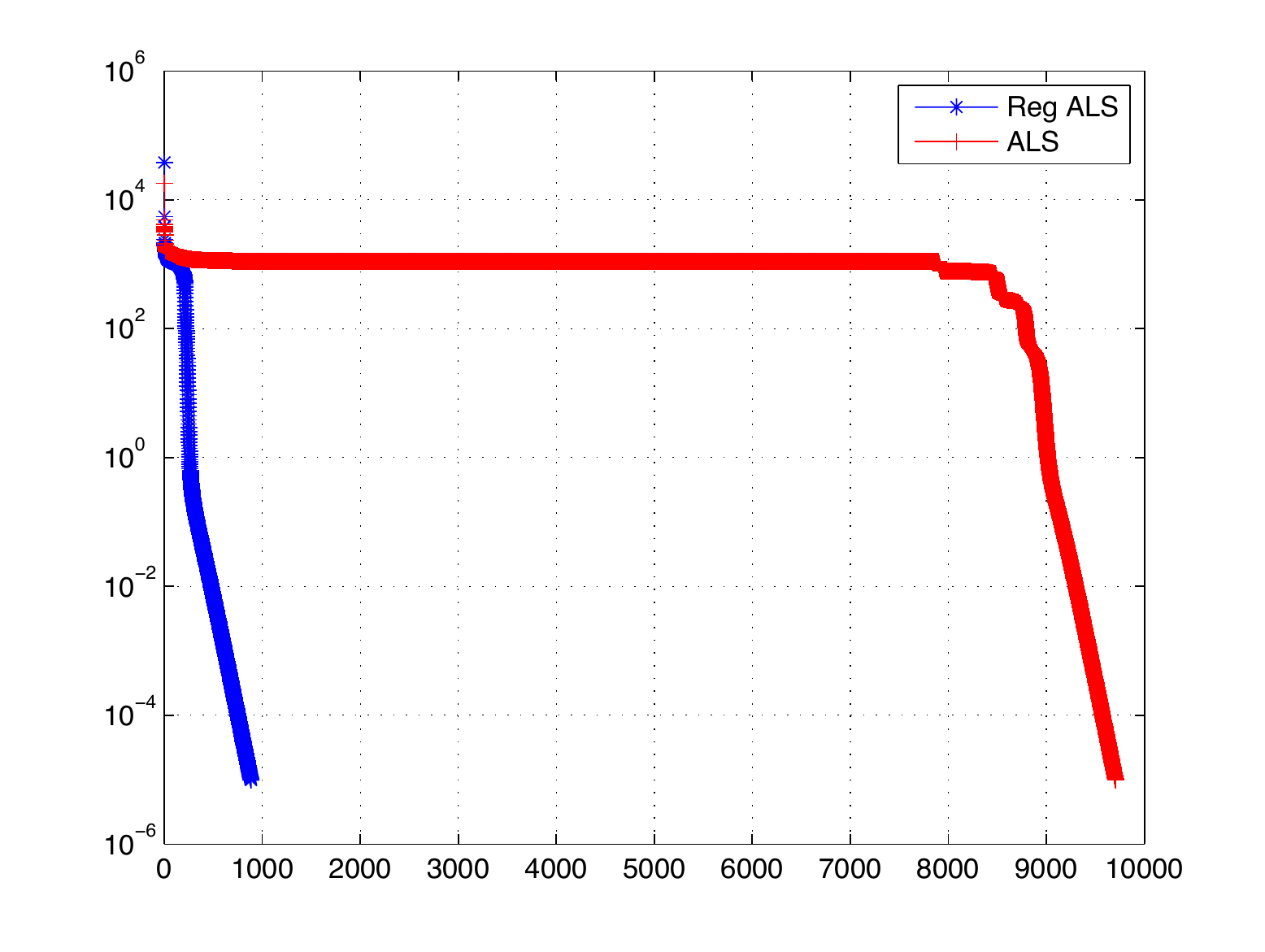}}
\qquad\quad
\subfloat[Singularity of Khatri-Rao]{\label{fig: singularity2}\includegraphics[height=4cm]{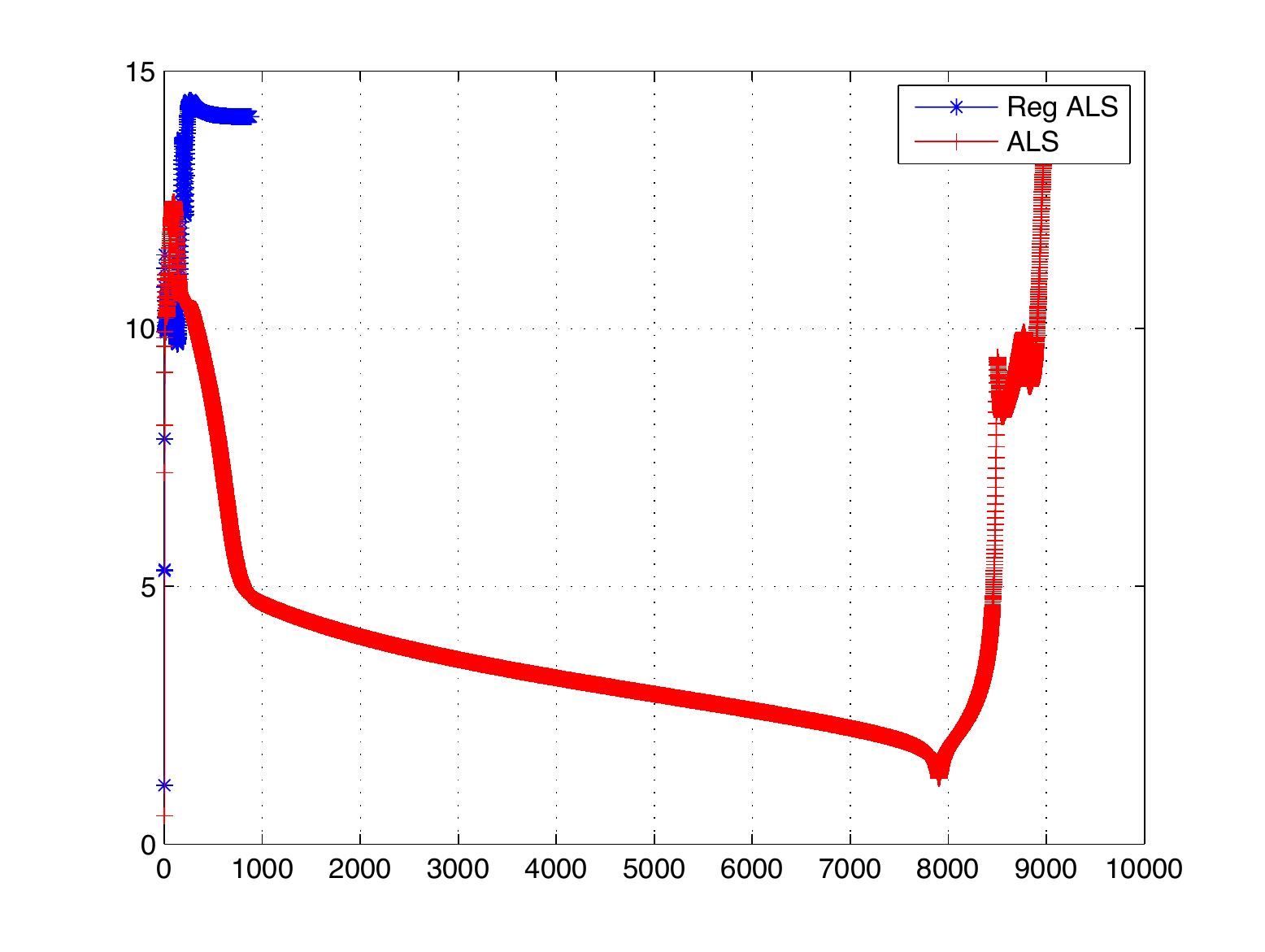}}
\caption{}
\label{fig: example3}
\end{figure}

To understand why RALS is not hampered by a swamp, let's look at the normal equation of the subproblem (we have already mentioned in the last section (\ref{ralslsq})):
%which are equivalent to solve the two least squares problem (\ref{alslsq}) and (\ref{ralslsq}):
%\begin{eqnarray*}
%(\bold{C}^{k}\odot\bold{B}^{k})\bold{X}={T_{(1)}}^{\text{T}},
%\end{eqnarray*}
\begin{eqnarray*}
\begin{bmatrix} 
 (\widetilde{\bold{C}}^{k}\odot\widetilde{\bold{B}}^{k}) \\
 \lambda_{k} \cdot \bold{I}^{R\times R}
\end{bmatrix}\bold{X}=
\begin{bmatrix}
{T_{(1)}}^{\text{T}} \\
\lambda_{k} \cdot (\widetilde{\bold{A}}^k)^{\text{T}}
\end{bmatrix}
\end{eqnarray*}
where the least squares solution is $\widetilde{\bold{A}}^{k+1}$. The submatrix $\lambda_{k} \cdot \bold{I}^{R\times R}$ in (\ref{ralslsq}) embeds the range space of $(\widetilde{\bold{C}}^{k}\odot\widetilde{\bold{B}}^{k})$ in a higher dimensional space while induces a full rank linear least-squares subproblem. Thus,  the regularization keeps the cost function strictly component-wise quasiconvex.

\subsection{Example IV: Large Real Datasets}
Since ALS type algorithms have been particularly useful in real large datasets, a comparison study of the ALS and RALS algorithms was made on a tensor $\mathcal{X} \in \mathbb{R}^{170 \times 274 \times 35}$ from the paper of Bro et al.~\cite{Bro1} in detecting and characterizing active photosensitizers in butter. The light exposure experimental data is obtained from 
 different colors of light, variation in oxygen availability, and time of exposure while measuring the fluorescence EEMs (excitation emission matrices) and sensory evaluation of the samples.
Thus the element $x_{ijk}$ represents the fluorescence intensity for sample $i$ at excitation wavelength $j$ and emission wavelength $k$. CP algorithms offer decomposition into factors of sample scores, emission loadings and excitation loadings. %$a_{ir}$, emission loadings $b_{jr}$ and excitation loadings $c_{kr}$ for each factor $r$.

The ALS and RALS algorithms were used to analyze the fluorescence landscapes with rank $R=7$.  In Figure \ref{fig:Bro}, $100$ different random initial starters for ALS and RALS were used on tensor $\mathcal{X}$. The relative error ($\Vert \mathcal {X}^k -\mathcal{X}^{k-1}\Vert_F^2$) was used as the  stopping criterion, but the absolute error ($\Vert \mathcal{X}-\mathcal{X}_{\mbox{final}}\Vert_F^2$)  was measured as well. The table in Figure \ref{fig:Bro} shows that RALS performed slightly better than ALS with respect to both relative and absolute errors as well as the number of iterations.  
%\begin{center}
%    \begin{tabular}{|c|c|c| }
%     \hline
%                              & ALS        & RALS      \\ \hline
%        rel. error       & $1.2717\times 10^{-4}$  & $1.3842 \times 10^{-4}$ \\ \hline
%        abs. error     & 3850.8   &  3832.9 \\ \hline
%        iterations      &3025 & 2968 \\ 
%         \hline
%    \end{tabular}
%\end{center}
\begin{figure}[h!]
\begin{center}
\includegraphics[width=.6\textwidth]{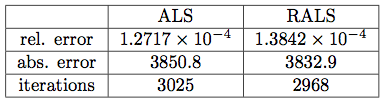}
\includegraphics[width=.70\textwidth]{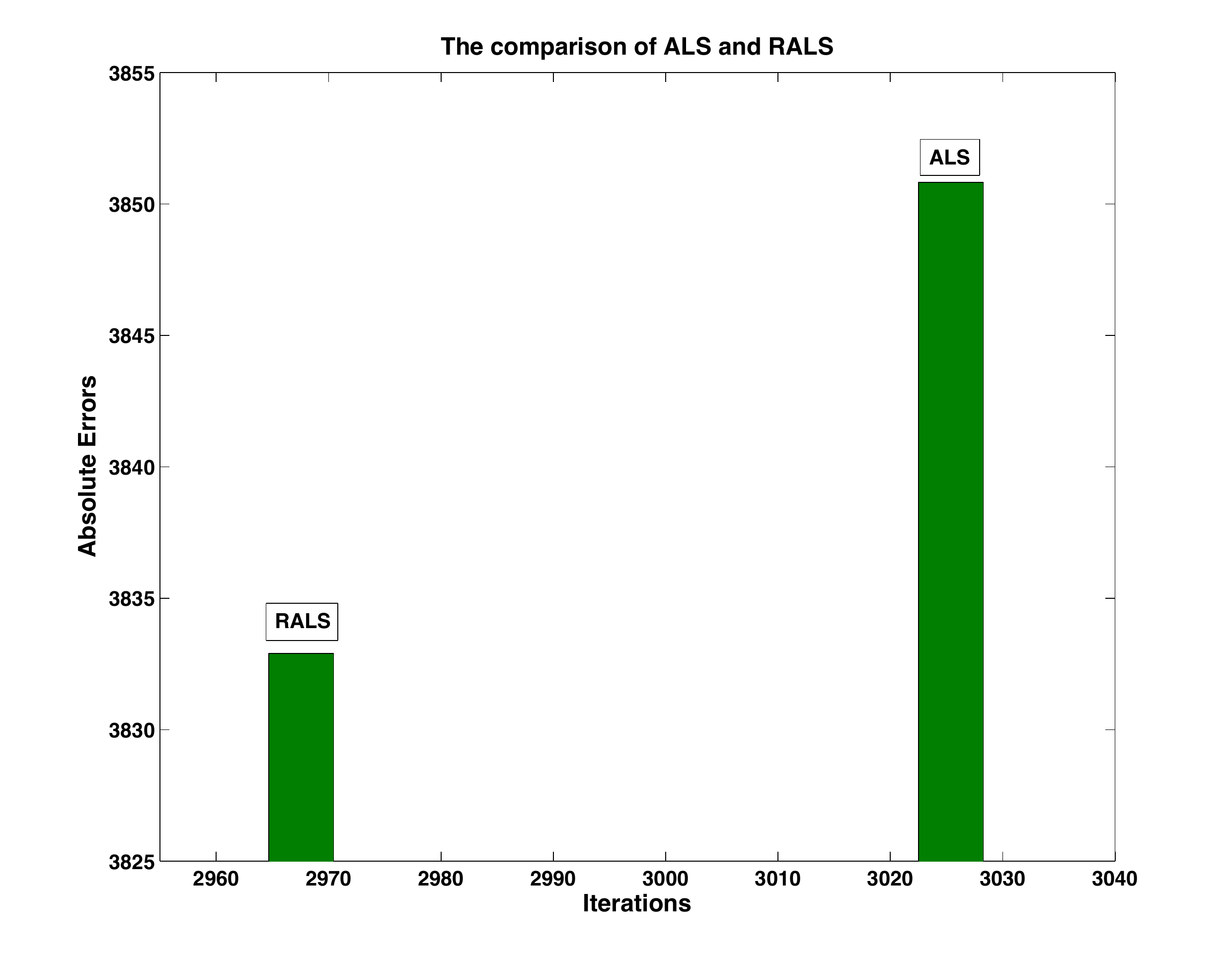}
\caption{A comparison of ALS and RALS for large data set \cite{Bro1}, averaging $100$ random initialized runs.}
\label{fig:Bro}
\end{center}
\end{figure}

\section{Conclusion}
The RALS  method proposed by Navasca, Kindermann and De Lathauwer \cite{NDLK} is a numerical technique for the classical problem of solving the CP decomposition of a given tensor.  We examined the RALS method to find some theoretical explanations on what we observed numerically.
In many instances, several examples showed that RALS converges faster than ALS. Moreover, RALS decreases the high number of ALS iterations, thereby removing the swamp to some degree. Furthermore, our numerical experiments provide us a numerical justification that ALS swamping is related to the rank deficiency of the Khatri-Rao products. This phenomena is not present when the RALS  algorithm is implemented. Based on these observations, it is important to study the theoretical properties of RALS and its differences from ALS. Both the ALS and the RALS are related to the GS and the proximal modification of GS (PGS), respectively, by vectorizing the three factor matrices in the cost functionals. Using the properties of PGS, we have proved that the limit point of a converging sequence obtained from the RALS algorithm is a critical point of the original ALS problem. Some difficulties arise when proving the same convergent results for ALS due to the lack of strict quasiconvexity.  These same difficulties are exhibited numerically as swamps.

\section*{Acknowledgements}

C.N. and N.L. are both in part supported by the U.S. National Science Foundation DMS-0915100. C.N. and N.L. are very grateful to Martin Mohlenkamp for useful conversations at the Tensor Workshop at the American Institute of Mathematics in Palo Alto. C.N. would like to thank the hospitality of the Industrial Mathematics Institute at Johannes Kepler Universit\"at Linz during her visits. We are also grateful to an anonymous  referee  for various helpful comments. %would like to thank the anonymou referee for

\baselineskip=12pt

\end{document}